\title{\bf Nowhere differentiable functions of analytic type on products of 
finitely connected planar domains}
\date{}
\newcommand{\norm}[1]{\left\lVert#1\right\rVert}
\newtheorem{theorem}{Theorem}[section]
\newtheorem{lemma}[theorem]{Lemma}
\newtheorem{proposition}[theorem]{Proposition}
\newtheorem{definition}[theorem]{Definition}
\newtheorem{remark}[theorem]{Remark}
\numberwithin{equation}{section}
\begin{document}
\author[1]{Vlassis Mastrantonis} \author[2]{Christoforos Panagiotis\thanks{Supported by the European Research Council (ERC) under the European Union's Horizon 2020 research and innovation programme (grant agreement No 639046). Research was conducted at the University of Athens, Department of Mathematics}}

\affil[1]{Department of Mathematics, University of Athens, Panepistimioupolis, 157 84, Athens, Greece.\\
E-mail: \href{mailto:blasismas@hotmail.com}{\url{blasismas@hotmail.com}}}

\affil[2]{{Mathematics Institute}\\
 {University of Warwick}\\
  {CV4 7AL, UK}\\
E-mail: \href{mailto:C.Panagiotis@warwick.ac.uk}{\url{C.Panagiotis@warwick.ac.uk}}}
\maketitle

\vspace{-6ex}
\begin{abstract}
\footnotesize
Using Laurent decomposition and Mergelyan's theorem combined with Baire's category theorem, we prove generic nowhere differentiability on the distinguished boundary of functions of analytic type on products of planar domains bounded by finitely many disjoint Jordan curves. The parametrization of the boundaries are those induced by the natural Riemann maps.
\end{abstract}

\section{\textbf{Introduction}}

In the nineteenth century, it was widely believed that every continuous function defined on 
$\mathbb{R}$ has derivative at a significant set of points. Weierstrass was the first one to 
disprove this claim by providing an explicit example of a function $u:\mathbb{R}\rightarrow
\mathbb{R}$ which is continuous, 2$\pi$-periodic, but not differentiable at any real number. 
Since then the existence of such functions attracted the interest of many mathematicians.

In fact, as Banach and Mazurkiewicz independently proved, in \cite{[1]} and \cite{[6]} respectively, 
generically every continuous function on a compact interval $J$ of $\mathbb{R}$ is
nowhere differentiable. In \cite{[2]}, the above result has been generalized for
functions in the disc algebra $A(D)$, i.e. continuous functions defined on
the closed unit disc $\overline{D}$, which are also holomorphic on $D$. In particular, it is 
proved that generically for every function $f\in A(D)$ both functions 
$Ref\restriction_\mathbb{T}$ and $ Imf\restriction_\mathbb{T}$ are nowhere 
differentiable. The result is even extended for functions 
$f : \overline{D}^I\rightarrow \mathbb{C}$ that belong to the algebra $A(D^I)$, namely $f$ is continuous on $\overline{D}^I$, which is endowed with the cartesian topology, and separately
holomorphic on $D^I$.

In section 4, we prove a similar result in the case of a domain $\Omega$ bounded by a 
finite number of disjoint Jordan curves. If $\Omega_0, \Omega_1,...\Omega_{n}$ are the 
complements of the connected components of $(\mathbb{C}\cup \{\infty\})\setminus\Omega$, then the Riemann mappings 
$\phi_i: \overline{D}\rightarrow \overline{\Omega_i}$, $i=1,...,n$ yield 
a parametrization of $\partial \Omega_i$, $i=0,1,...,n$, with parameter $\theta\in\mathbb{R}$. We will prove that generically for 
every $f\in A(\Omega)$ the functions $Re(f\circ\phi_i)\restriction_\mathbb{T}$ and $
Im(f\circ\phi_i)\restriction_\mathbb{T}$ are nowhere differentiable with respect to 
$\theta\in\mathbb{R}$. In order to do so, we combine the Laurent decomposition of every function $f\in A(\Omega)$ with the results of the previous sections which deal with the case of the disc or more generally with the case of Jordan domains. 

In section 5, we generalize the above result in the context of several complex variables. Specifically, if we consider sets 
$\Omega$ which are the finite or countably infinite cartesian products of domains in $\mathbb{C}$ as in section 4, then a parametrization of the distinguish boundary of $\Omega$ arises from the Riemann mappings of the factors $\Omega_i$ of the product $\Omega$. 
We prove that, generically, for every $f\in A(\Omega)$ neither
the real part $Ref$, nor the imaginary part 
$Imf$ have directional
derivatives at any point of the distinguished boundary of $\Omega = \prod_{i\in I}\Omega_i$
with respect to the above parametrization, for any direction
$v\in l^\infty\setminus\{0\}$. We first treat the case of finite products that is when the set $I$ is finite. In the case of infinitely many variables, we use the result for the case of finitely many variables and the fact that every $f\in A(\Omega)$ can be uniformly approximated by functions depending on a finite set of variables (\cite{[3]}, \cite{[5]}).

\section{The open unit disk}

We denote the open unit disk in $\mathbb{C}$ by $D$ and $\mathbb{T}=\partial D$. Let 
$\Omega$ be a domain in $\mathbb{C}$. Although the only domains we will deal with in this 
sections are $D$ and $\hat{\mathbb{C}}\setminus \overline{D}$ 
we will give a general definition which will be useful more generally.

\begin{definition}
Let $\Omega$ be a domain in $\mathbb{C}$. A function 
$f:\overline{\Omega} \rightarrow \mathbb{C}$ belongs to $A(\Omega)$ if it is continuous on
$\overline{\Omega}$ and holomorphic on $\Omega$. We 
endow $A(\Omega)$ with the topology of uniform convergence on compact subsets of $\overline{\Omega}$, where the closure of $\Omega$ is taken in $\mathbb{C}$.
\end{definition}

We note that a function defined on $\mathbb{T}$ can equivalently be thought as a 2$\pi$-periodic function defined on $\mathbb{R}$. Thus, by abuse of notation we write $u(y)$ instead 
of $u(e^{iy})$ for $y\in\mathbb{R}$.

\begin{definition}
A function $f\in C(\mathbb{T})$ belongs to $Z$ if for every $\theta\in\mathbb{R}$
\begin{equation}
\limsup\limits_{y\rightarrow\theta^+} \left| \frac{Ref(y) - Ref(\theta)}{y-\theta} 
\right| = +\infty
\end{equation}
and
\begin{equation}
\limsup\limits_{y\rightarrow\theta^+} \left| \frac{Imf(y) - Imf(\theta)}{y-\theta} 
\right| = +\infty  
\end{equation}
\end{definition}

In \cite{[2]}, it has been proven that the set of functions $f\in A(D)$ such that 
$f\restriction{_\mathbb{T}}$ belongs to $Z$, which we denote by $Z(D)$, 
is a dense and $G_{\delta}$ subset of the disk algebra $A(D)$. 
 
It will be useful to restrict ourselves to functions $f\in A(D)$, such that $f(0)=0$. We 
denote the space of such functions by $A_0(D)$ and we endow it with the subspace topology. Using the above result we are going to prove 
that the functions $f\in A_0(D)$ such that $f\restriction_{\mathbb{T}}\in Z$ is dense and $G_\delta$ in $A_0(D)$. Denote this set by $Z_0(D)$.

\begin{proposition}
The set of functions $f\in A_0(D)$ such that $f\restriction{_\mathbb{T}}$ belongs to $Z$ is a 
dense and $G_{\delta}$ subset of $A_0(D)$.
\end{proposition}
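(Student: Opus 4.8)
The plan is to deduce both assertions from the result quoted from \cite{[2]}, namely that $Z(D)$ is dense and $G_\delta$ in $A(D)$. The first thing to record is that, since $A_0(D)$ carries the subspace topology inherited from $A(D)$ and is in fact a closed linear subspace (the evaluation $f\mapsto f(0)$ being continuous for uniform convergence on $\overline D$), we have the identity $Z_0(D)=Z(D)\cap A_0(D)$.

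The $G_\delta$ part is then immediate: writing $Z(D)=\bigcap_{n\in\mathbb{N}}U_n$ with each $U_n$ open in $A(D)$, we get $Z_0(D)=\bigcap_{n\in\mathbb{N}}\bigl(U_n\cap A_0(D)\bigr)$, a countable intersection of relatively open subsets of $A_0(D)$. Nothing further is needed here.

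For density the only real point is the following elementary observation: whether $g\restriction_{\mathbb{T}}$ belongs to $Z$ depends only on the increments $Re\,g(y)-Re\,g(\theta)$ and $Im\,g(y)-Im\,g(\theta)$, and these are unchanged if $g$ is replaced by $g-c$ for a constant $c\in\mathbb{C}$; hence $g\in Z(D)$ if and only if $g-c\in Z(D)$. Given $f\in A_0(D)$ and $\varepsilon>0$, I would use density of $Z(D)$ in $A(D)$ to pick $h\in Z(D)$ with $\sup_{\overline D}|f-h|<\varepsilon/2$, and then set $g:=h-h(0)$. Then $g(0)=0$, so $g\in A_0(D)$; since $g$ differs from $h$ by the constant $h(0)$, also $g\in Z(D)$, hence $g\in Z_0(D)$; and because $f(0)=0$ we have $|h(0)|=|h(0)-f(0)|\le \sup_{\overline D}|f-h|<\varepsilon/2$, so $\sup_{\overline D}|f-g|\le \sup_{\overline D}|f-h|+|h(0)|<\varepsilon$. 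Thus $Z_0(D)$ meets every ball in $A_0(D)$, i.e. it is dense.

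There is no serious obstacle. The only thing to be careful about is that the constant correction $h(0)$, needed to move the approximant into $A_0(D)$, is automatically small — this is exactly where the normalization $f(0)=0$ enters — and that subtracting this constant does not destroy the nowhere-differentiability encoded by $Z$. Note that completeness of $A_0(D)$ and the Baire category theorem are not required for this proposition itself, though they make $Z_0(D)$ comeager and will be used when this set is invoked in the later sections.
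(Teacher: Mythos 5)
Your proposal is correct and follows essentially the same route as the paper: the $G_\delta$ part comes from $Z_0(D)=Z(D)\cap A_0(D)$ together with the subspace topology, and density is obtained by approximating $f\in A_0(D)$ by some $h\in Z(D)$ and subtracting the constant $h(0)$, which is small because $f(0)=0$ and which does not affect membership in $Z$. No substantive differences to report.
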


\begin{proof}
Observe that $Z_0(D)= Z(D)\cap A_0(D)$.
Consider the mapping
\begin{align*}
L: A_0(D)& \longrightarrow A(D)\\
f& \longmapsto f.
\end{align*}
$L$ is obviously continuous and it is easy to see that 
$L^{-1}(Z(D))=Z_0(D)$ is $G_\delta$. 
So it suffices to show that $Z_0$ is dense in $A_0(D)$. 

Let $\epsilon>0$ and 
$f\in A_0(D)\subset A(D)$. Since $Z(D)$ is dense in $A(D)$ there exists a 
$g\in Z(D)$ such that 
\begin{equation}
\norm{f-g}_{\infty}<\frac{\epsilon}{2}.
\end{equation}
More specifically 
\begin{equation}
|f(0)-g(0)|<\frac{\epsilon}{2}
\end{equation}
It is now obvious that $\tilde{g}(z)=g(z)-g(0)\in A_0(D)$ and $\tilde{g}\in Z_0(D)$. 
Using the triangular inequality we deduce that 
\begin{equation}
\norm{f-\tilde{g}}_{\infty}<\epsilon,
\end{equation}
which completes the proof.
\end{proof}

We will now prove similar results for a subspace of $A(\hat{\mathbb{C}}\setminus\overline{D})$ denoted by
$A_0(\hat{\mathbb{C}}\setminus\overline{D})$. In general, if the domain $\Omega$ is the complement of a compact set, 
then a function $f$ belongs to $A_0(\Omega)$ if 
$f\in A(\Omega)$ and 
\begin{equation}
\lim_{z\to\infty}f(z)=0.
\end{equation}

\begin{proposition}\label{exterior}
The set of functions $f\in A_{0}(\hat{\mathbb{C}} \setminus \overline{D})$ such that 
$f\restriction_{\mathbb{T}}\in Z$ is a dense and $G_\delta$ subset of 
$A_{0}(\hat{\mathbb{C}} \setminus \overline{D})$.
\end{proposition}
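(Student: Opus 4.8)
The plan is to reduce to the interior case treated in the previous proposition by means of a conjugate--inversion which transports $A_0(\hat{\mathbb{C}}\setminus\overline{D})$ homeomorphically onto $A_0(D)$ while preserving the boundary parametrization. Concretely, I would consider
\[
T : A_0(\hat{\mathbb{C}}\setminus\overline{D}) \longrightarrow A_0(D), \qquad (Tf)(w) = \overline{f(1/\overline{w})}\ \ (w\neq 0),\qquad (Tf)(0)=0.
\]
First I would check that $T$ is well defined. The map $w\mapsto 1/\overline{w}$ is a homeomorphism of $\overline{D}$ onto $\{\,|z|\ge 1\,\}\cup\{\infty\}$ sending $0$ to $\infty$, so $Tf$ is continuous on $\overline{D}$ and $(Tf)(0)=\overline{\lim_{z\to\infty}f(z)}=0$; on $D$ the composition $f(1/\overline{w})$ is antiholomorphic, hence its complex conjugate $Tf$ is holomorphic. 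Thus $Tf\in A_0(D)$. The very same formula produces the inverse of $T$, and since the closures $\overline{D}$ and $\overline{\hat{\mathbb{C}}\setminus\overline{D}}$ are compact (so both $A_0$--spaces carry the sup--norm topology) and conjugation together with $w\mapsto 1/\overline{w}$ are isometries, $T$ is a homeomorphism.

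The second point is that $T$ respects the class $Z$. On the unit circle $1/\overline{e^{iy}}=e^{iy}$, hence $(Tf)(e^{iy})=\overline{f(e^{iy})}$, so that $\mathrm{Re}(Tf)(y)=\mathrm{Re}f(y)$ and $\mathrm{Im}(Tf)(y)=-\mathrm{Im}f(y)$ for every $y\in\mathbb{R}$. Since the two conditions defining $Z$ only involve the absolute values of the difference quotients, $Tf\restriction_{\mathbb{T}}\in Z$ if and only if $f\restriction_{\mathbb{T}}\in Z$. In other words, $T$ maps $\{\,f\in A_0(\hat{\mathbb{C}}\setminus\overline{D}) : f\restriction_{\mathbb{T}}\in Z\,\}$ onto $Z_0(D)$.

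Finally I would invoke the previous proposition, which gives that $Z_0(D)$ is dense and $G_\delta$ in $A_0(D)$. Pulling this back through the homeomorphism $T$ shows that $\{\,f\in A_0(\hat{\mathbb{C}}\setminus\overline{D}) : f\restriction_{\mathbb{T}}\in Z\,\}=T^{-1}(Z_0(D))$ is dense and $G_\delta$ in $A_0(\hat{\mathbb{C}}\setminus\overline{D})$, which is exactly the assertion. The only genuinely delicate point is the verification that $Tf$ is holomorphic on $D$ — that conjugation turns the antiholomorphic map $f\circ(w\mapsto 1/\overline{w})$ into a holomorphic one — which is most transparently seen either through the Wirtinger identity $\partial_w\bigl(f(1/\overline{w})\bigr)=0$, or, equivalently, by expanding $f(z)=\sum_{n\ge 1}a_n z^{-n}$ on $\hat{\mathbb{C}}\setminus\overline{D}$ and observing that $(Tf)(w)=\sum_{n\ge 1}\overline{a_n}\,w^{n}$.
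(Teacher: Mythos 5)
Your proof is correct, and it follows the same overall strategy as the paper --- transport the problem to $A_0(D)$ by an isometric correspondence induced by inversion and pull back the dense $G_\delta$ set $Z_0(D)$ from the preceding proposition --- but your transfer map is genuinely different. The paper composes with the holomorphic inversion $\phi(z)=1/z$, i.e.\ $l(f)=f\circ\phi$, which is $\mathbb{C}$-linear but reverses the orientation of $\mathbb{T}$ (since $\phi(e^{iy})=e^{-iy}$); this is why the paper needs the displayed identities comparing the difference quotient of $f$ at the pair $(y,\theta)$ with that of $f\circ\phi$ at $(-y,-\theta)$, and, strictly speaking, it turns the one-sided $\limsup_{y\to\theta^+}$ in the definition of $Z$ into a left-sided condition at $-\theta$ --- a point the paper passes over silently (harmlessly, since the left-sided variant of $Z$ is dense and $G_\delta$ by the same arguments). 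Your conjugate-inversion $(Tf)(w)=\overline{f(1/\overline{w})}$ is only conjugate-linear, but it fixes $\mathbb{T}$ pointwise, so on the circle $\mathrm{Re}(Tf)=\mathrm{Re}f$ and $\mathrm{Im}(Tf)=-\mathrm{Im}f$ at the \emph{same} angle, and membership in $Z$ transfers verbatim, one-sidedness included; the conjugate-linearity costs nothing because $T$ is still an additive isometric bijection, hence a homeomorphism, which is all that is needed to pull back a dense $G_\delta$ set. The one extra verification your route requires --- that $Tf$ is holomorphic on $D$, including at $0$ --- you handle correctly, most cleanly via the expansion $f(z)=\sum_{n\ge 1}a_nz^{-n}$ giving $(Tf)(w)=\sum_{n\ge 1}\overline{a_n}\,w^{n}$ with $(Tf)(0)=0$.
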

\begin{proof}
Consider the mappings
\begin{align}
\phi: \bar{D} &\longrightarrow \hat{\mathbb{C}} \setminus D \\
\nonumber z & \longmapsto \frac{1}{z}
\end{align}
and
\begin{align}
l: A_{0}(\hat{\mathbb{C}} \setminus \bar{D})& \longrightarrow A_{0}(D)\\
\nonumber f& \longmapsto f\circ \phi
\end{align}
We can easily see that $l$ is continuous, onto $A_{0}(D)$ and 
\begin{equation}
\norm{l(f) - l(g)}_{\infty} = \norm{f-g}_{\infty}.
\end{equation}
Hence, $l$ is an isometric isomorphism, which implies that the set
$Z_0(\hat{\mathbb{C}} \setminus \overline{D}):=l^{-1}(Z_0(D))$ is dense and $G_{\delta}$ in 
$A_{0}(\hat{\mathbb{C}} \setminus \bar{D})$.

We will now prove that $Z_0(\hat{\mathbb{C}} \setminus \overline{D})$ coincides with the set of 
functions $f\in A_{0}(\hat{\mathbb{C}} \setminus \overline{D})$ such that 
$f\restriction_{\mathbb{T}}\in Z$. This
derives from the following equalities:

\begin{equation}
\left| \frac{u(y) - u(\theta)}{y-\theta} \right| = \left| \frac{(u\circ\phi)(-y) - (u\circ\phi)
(-\theta)}{(-y)-(-\theta)} \right| ,
\end{equation}

\begin{equation}
\left| \frac{v(y) - v(\theta)}{y-\theta} \right| = \left| \frac{(v\circ\phi)(-y) - (v\circ\phi)
(-\theta)}{(-y)-(-\theta)} \right|,
\end{equation}
where $u,v$ are the real and imaginary part of a function $f\in A_{0}(D)$ respectively and 
$\theta,y\in\mathbb{R}$ are such that $y\neq\theta$.
\end{proof}

\section{Jordan Domains}

In this section we replace the unit circle $\mathbb{T}$ by a Jordan curve $J$ and we prove similar results for functions defined on the domain $\Omega$, which is the bounded 
or unbounded connected component of the complement in $\hat{\mathbb{C}}$ of $J$. 
For such domains $\Omega$, we consider a Riemann mapping 
$\phi: \overline{D}\rightarrow\overline{\Omega}$ and we claim that generically for 
every function $f\in A(\Omega)$, the function $(f\circ \phi)\restriction_{\mathbb{T}}$ belongs
to $Z$. In what follows, we prove the aforementioned assertions.

\begin{theorem}\label{Jordan}
Let $\Omega$ be a Jordan domain and $\phi: \overline{D}\rightarrow\overline{\Omega}$ a Riemann 
mapping. The set of functions $f\in A(\Omega)$ such that 
$(f\circ \phi)\restriction_{\mathbb{T}}$ belongs to $Z$ is a dense and $G_{\delta}$ subset of 
$A(\Omega)$.
\end{theorem}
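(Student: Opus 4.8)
The plan is to reduce Theorem \ref{Jordan} to the already-established disk case (Proposition about $Z_0(D)$, and indeed the original $Z(D)$ result from \cite{[2]}) by transporting everything through the Riemann map $\phi$. The key point is that composition with $\phi$ gives a topological isomorphism between $A(\Omega)$ and $A(D)$: since $\phi:\overline{D}\to\overline{\Omega}$ is a homeomorphism (Carathéodory's theorem, as $J$ is a Jordan curve) which is holomorphic on $D$ with holomorphic inverse on $\Omega$, the map $C_\phi: A(\Omega)\to A(D)$, $f\mapsto f\circ\phi$ is a well-defined linear bijection. First I would check that $C_\phi$ is a homeomorphism for the topology of uniform convergence on compact subsets of the closures: a compact $K\subseteq\overline{\Omega}$ corresponds to the compact set $\phi^{-1}(K)\subseteq\overline{D}$ and vice versa, so $\sup_K|f\circ\phi^{-1}\circ\phi - g\circ\phi^{-1}\circ\phi|$ translates directly; here since $\overline{D}$ is compact this is just uniform convergence on all of $\overline{D}$, hence $C_\phi$ is an isometric isomorphism for $\norm{\cdot}_\infty$. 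Consequently $C_\phi$ maps dense $G_\delta$ sets to dense $G_\delta$ sets.

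Next I would identify the image. By definition, $f\in A(\Omega)$ satisfies $(f\circ\phi)\restriction_{\mathbb{T}}\in Z$ if and only if $C_\phi(f)\in Z(D)$, where $Z(D)$ is the set of $g\in A(D)$ with $g\restriction_{\mathbb{T}}\in Z$. In other words the set in question is exactly $C_\phi^{-1}(Z(D))$. Since $Z(D)$ is dense and $G_\delta$ in $A(D)$ by \cite{[2]}, and $C_\phi$ is a homeomorphism of $A(\Omega)$ onto $A(D)$, the preimage $C_\phi^{-1}(Z(D))$ is dense and $G_\delta$ in $A(\Omega)$, which is precisely the assertion.

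There is however one genuine subtlety that I expect to be the main obstacle, namely the unbounded case: when $\Omega$ is the unbounded component of $\hat{\mathbb C}\setminus J$, a Riemann mapping is $\phi:\overline{D}\to\overline{\Omega}$ with $\overline{\Omega}$ taken in $\hat{\mathbb C}$, so $\phi$ sends some point of $\overline{D}$ to $\infty$ and the relevant function algebra is best understood via the normalization at $\infty$. One must be careful about what ``$A(\Omega)$ with uniform convergence on compact subsets of $\overline{\Omega}$'' means when $\overline{\Omega}\subseteq\mathbb{C}$ is itself unbounded — the compact subsets are bounded closed sets, and $\phi$ still maps them to compact subsets of $\overline{D}\setminus\{\phi^{-1}(\infty)\}$, but the correspondence of compacta is no longer ``everything to everything'', so one should first pass to $A_0$-type spaces (subtracting the value at the preimage of $\infty$, or equivalently composing with the reciprocal map as in Proposition \ref{exterior}) to get a clean isometric isomorphism. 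I would handle the bounded case directly as above, and reduce the unbounded case to the bounded disk picture by factoring $\phi$ through the reciprocal map $z\mapsto 1/z$ exactly as in the proof of Proposition \ref{exterior}, so that the relevant homeomorphism lands in $A_0(D)$ and one invokes the density and $G_\delta$-ness of $Z_0(D)$ instead. The remaining verifications — that the difference quotients defining membership in $Z$ are literally unchanged under the parametrization, since $Z$ is defined intrinsically in terms of the boundary parameter $\theta$ — are routine and parallel the displayed identities in the proof of Proposition \ref{exterior}.
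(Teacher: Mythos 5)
Your argument is exactly the paper's proof: the composition operator $f\mapsto f\circ\phi$ is an isometric isomorphism of $A(\Omega)$ onto $A(D)$ (via Carath\'eodory), so the set in question is the preimage of $Z(D)$ and inherits density and the $G_\delta$ property. Your extra discussion of the unbounded component is not needed for this statement, since Theorem \ref{Jordan} concerns only the bounded Jordan domain and the unbounded case is treated separately in Theorem \ref{complement}, where the paper reduces to $A_0(\hat{\mathbb{C}}\setminus\overline{D})$ just as you propose.
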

\begin{proof}
Let
\begin{align}
L: A(\Omega)& \longrightarrow A(D)\\
\nonumber f& \longmapsto f\circ \phi
\end{align}
It is easy to verify that $L$ is an isometric isomorphism, which implies that the set 
$L^{-1}(Z(D))$ is a dense and $G_{\delta}$ subset of 
$A(\Omega)$.

Furthermore $L^{-1}(Z(D))$ coincides with the set of functions $f\in A(\Omega)$ such that 
$(f\circ\phi)\restriction_{\mathbb{T}}$ belongs to $Z$, because, for any $f\in A(\Omega)$ we have that:
$(f\circ\phi)\restriction_{\mathbb{T}}\in Z \Leftrightarrow L(f)\restriction_{\mathbb{T}}\in Z \Leftrightarrow L(f)\in Z(D) \Leftrightarrow f\in L^{-1}(Z(D))$.
\end{proof}

\begin{theorem}\label{complement}
Let $\gamma$ be a Jordan curve and $\Omega$ the unbounded connected component of 
$\mathbb{C}\setminus\gamma^*$. Let also 
$\phi: \overline{D}\rightarrow\overline{\Omega\cup\{\infty\}}$ be a Riemann 
mapping. The set of functions $f\in A(\Omega\cup\{\infty\})$ such that 
$(f\circ \phi)\restriction_{\mathbb{T}}$ belongs to $Z$ is a dense and $G_{\delta}$ subset of 
$A(\Omega\cup\{\infty\})$.
\end{theorem}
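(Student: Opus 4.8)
The plan is to mimic the proof of Theorem~\ref{complement}'s companion results, in particular Theorem~\ref{Jordan} and Proposition~\ref{exterior}, by transporting everything to a model domain via an isometric isomorphism. Concretely, I would first reduce to the case of the exterior of the unit disk. Since $\gamma$ is a Jordan curve and $\Omega$ is the unbounded component of its complement, $\Omega\cup\{\infty\}$ is a Jordan domain in $\hat{\mathbb C}$ containing $\infty$. Composing a Riemann map $\hat{\mathbb C}\setminus\overline D\to\Omega\cup\{\infty\}$ (extended to the boundary by Carath\'eodory's theorem, since $\gamma$ is a Jordan curve) with $\phi$ gives an automorphism of $\overline D$, hence a M\"obius transformation $\psi$ of $\overline D$ onto itself. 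So without loss of generality it suffices to understand the relationship between the Riemann-map parametrization of $\mathbb T$ and that of $\gamma^*$.

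The key step is to define
\begin{align*}
L: A(\Omega\cup\{\infty\})&\longrightarrow A(\hat{\mathbb C}\setminus D)\\
f&\longmapsto f\circ\Phi,
\end{align*}
where $\Phi:\overline{\hat{\mathbb C}\setminus D}\to\overline{\Omega\cup\{\infty\}}$ is the Riemann map (the boundary correspondence again being a homeomorphism by Carath\'eodory), and to check that $L$ is an isometric isomorphism of Banach spaces. This is routine: $\Phi$ is a homeomorphism of the closures that is biholomorphic on the interiors, so $f\mapsto f\circ\Phi$ is a bijection between the two function algebras, and $\norm{L(f)-L(g)}_\infty=\norm{f-g}_\infty$ because $\Phi$ maps the closure onto the closure. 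Since by Proposition~\ref{exterior} the set of $g\in A_0(\hat{\mathbb C}\setminus\overline D)$ with $g\restriction_{\mathbb T}\in Z$ is dense and $G_\delta$ there — and more to the point, the analogous statement without the normalization $g(\infty)=0$, namely that $\{g\in A(\hat{\mathbb C}\setminus\overline D): g\restriction_{\mathbb T}\in Z\}$ is dense and $G_\delta$ in $A(\hat{\mathbb C}\setminus\overline D)$, which follows by the same splitting-off-the-constant argument as in Proposition~2.3 — pulling this set back under $L$ yields a dense $G_\delta$ subset of $A(\Omega\cup\{\infty\})$.

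It then remains to identify $L^{-1}$ of this set with $\{f\in A(\Omega\cup\{\infty\}): (f\circ\phi)\restriction_{\mathbb T}\in Z\}$. Here I would argue exactly as at the end of Theorem~\ref{Jordan}: for $f\in A(\Omega\cup\{\infty\})$ we have $(f\circ\phi)\restriction_{\mathbb T}\in Z$ iff $(L(f)\circ\psi)\restriction_{\mathbb T}\in Z$, where $\psi=\Phi^{-1}\circ\phi$ is the M\"obius automorphism of $\overline D$ from the first paragraph. The point I expect to be the main obstacle is precisely this last reduction: the class $Z$ is defined through difference quotients in the real parameter $\theta$, and one must check it is preserved under precomposition with a disk automorphism of $\mathbb T$, i.e. with a change of variable $e^{i\theta}\mapsto e^{i\tau(\theta)}$ where $\tau$ is a real-analytic, strictly increasing $2\pi$-periodic-modulo-$2\pi$ diffeomorphism with $\tau'$ bounded above and below away from $0$ on compacts. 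Since $\limsup$ of $|{\rm Re}\,f(y)-{\rm Re}\,f(\theta)|/|y-\theta|$ is multiplied by the nonzero factor $1/\tau'(\theta)$ under such a change of variables, the condition $\limsup=+\infty$ is invariant; one just has to write this out carefully, as was done implicitly for the map $y\mapsto -y$ in Proposition~\ref{exterior}. With that invariance in hand the chain of equivalences closes and the proof is complete.
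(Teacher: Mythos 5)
Your overall skeleton (transport to the exterior-disk model by an isometric isomorphism induced by a Riemann map, split off the constant to pass from the $A_0$-statement of Proposition \ref{exterior} to the full algebra, pull back the dense $G_\delta$ set) is the same idea as the paper's two-line proof. The problem is in your final identification step. The composite $\psi=\Phi^{-1}\circ\phi$ is \emph{not} an automorphism of $\overline D$: it is a conformal map of $\overline D$ onto the closed exterior $\overline{\hat{\mathbb C}\setminus D}$ (it equals $z\mapsto 1/m(z)$ for a disk automorphism $m$). It is still a M\"obius map, but its boundary parameter map $\tau$ is strictly \emph{decreasing}, never increasing, since $\phi$ carries the positively oriented circle to $\gamma$ with $\Omega\cup\{\infty\}$ on the left while $\Phi$ does so starting from the clockwise-oriented circle. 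This matters because $Z$ is defined by the one-sided condition $\limsup_{y\to\theta^+}$: precomposing with an orientation-reversing $\tau$ converts the right-hand $\limsup$ for $f\circ\phi$ at $\theta$ into a \emph{left-hand} $\limsup$ for $L(f)=f\circ\Phi$ at $\tau(\theta)$. So the asserted equivalence $(f\circ\phi)\restriction_{\mathbb T}\in Z\Leftrightarrow L(f)\restriction_{\mathbb T}\in Z$ is not what your invariance argument proves; as written, the ``$\limsup$ is multiplied by $1/\tau'(\theta)$'' step rests on the false premise that $\tau$ is increasing, and the right-hand class $Z$ and its left-hand analogue are not obviously the same set of functions.

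The gap is repairable, but something must be added: either prove (by the same Baire/approximation arguments, or by the symmetry $y\mapsto -y$) that the left-hand analogue of $Z$ also gives a dense $G_\delta$ subset of the exterior algebra and pull \emph{that} back, or--much simpler, and in effect what the paper does--drop $\Phi$ altogether and use the given $\phi$ itself: $f\mapsto f\circ\phi$ is an isometric isomorphism of $A(\Omega\cup\{\infty\})$ onto $A(D)$ (Carath\'eodory gives the boundary homeomorphism, and holomorphy at $\infty$ is handled in the chart $1/z$, which is where the $A_0$-spaces of Proposition \ref{exterior} enter in the paper), and the desired set is then exactly the preimage of $Z(D)$, so no reparametrization of $\mathbb T$ ever occurs and the one-sided issue never arises. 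It is worth noting that the same left/right subtlety is already latent in the paper's own proof of Proposition \ref{exterior} (the change of variable there is $y\mapsto -y$), but your route makes essential use of the invariance claim, so in your write-up it must be addressed rather than glossed over.
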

\begin{proof}
Consider the mapping
\begin{align}
\mathcal{L}: A_{0}(\hat{\mathbb{C}}\setminus\bar{\Omega})& \longrightarrow A_0(\hat{\mathbb{C}}
\setminus \bar{D})\\
\nonumber f& \longmapsto f\circ \phi.
\end{align}
Similarly to Proposition \ref{exterior} the desired set is a dense and $G_{\delta}$ subset of 
$A(\Omega\cup\{\infty\})$.
\end{proof}

\begin{remark} In Theorems \ref{Jordan} and \ref{complement} every Riemann mapping $\Phi: D\rightarrow
\Omega$ can be obtained from the composition of a fixed Riemann mapping $\phi$ of $\Omega$ 
with a Möbius transformation mapping $D$ onto itself. Therefore, one can easily see that if the functions 
$Re{(f\circ\phi)}{\restriction_{\mathbb{T}}}$ and $Im{(f\circ\phi)}{\restriction_{\mathbb{T}}}$
are nowhere differentiable with respect to $\theta\in \mathbb{R}$, then the same holds true for 
the functions $Re{(f\circ\Phi)}{\restriction_{\mathbb{T}}}$ and 
$Im{(f\circ\Phi)}{\restriction_{\mathbb{T}}}$. The above implication may not be true for an 
arbitrary parametrization of $\partial \Omega$.
\end{remark}

\section{Domains bounded by a finite number of disjoint Jordan curves}

Now we consider a more general setting, where $\Omega$ is a bounded domain whose boundary consists of a finite number of disjoint Jordan curves. If
$V_0,...,V_{n-1}$ are the connected components of $\hat{\mathbb{C}}\setminus\Omega$, $\infty \in V_0$
and $\Omega_0=\hat{\mathbb{C}}\setminus V_0,...,\Omega_{n-1}=\hat{\mathbb{C}}
\setminus V_{n-1}$, then there exist Riemann mappings $\phi_i:\overline{D}\rightarrow \overline{\Omega_i}$. We 
are interested in functions $f\in A(\Omega)$, such that 
$({f\circ\phi_i}){\restriction_{\mathbb{T}}}$ belongs to $Z$, for all $i=0,...,n-1$. 

For every $k\in\mathbb{N}$ and $i\in\{0,1,...,n-1\}$ consider the sets 
\begin{equation}
\begin{split}
D_k=\{u\in C_{\mathbb{R}}(T):\text{for every}\ \theta\in\mathbb{R} \text{ there exists}\ 
y\in\left(\theta,\theta+\dfrac{1}{k}\right)\\
\text{ such that } |u(y)-u(\theta)|>k|y-\theta|\}
\end{split}
\end{equation}
and
\begin{equation}
E^{(i)}_k=\{f\in A(\Omega): Re(f\circ\phi_i)\restriction_{\mathbb{T}}\in D_k\}
\end{equation}

\begin{lemma}\label{open}
For every $k\in\mathbb{N}$ and $i\in\{0,1,...,n-1\}$ the set $E^{(i)}_k$ is an open subset of 
$A(\Omega)$.
\end{lemma}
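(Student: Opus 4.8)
The plan is to show that the complement of $E^{(i)}_k$ is closed, or equivalently that $E^{(i)}_k$ is open, by a direct sequential argument. Suppose $f_m \to f$ in $A(\Omega)$ with $f_m \notin E^{(i)}_k$ for all $m$; I want to conclude $f \notin E^{(i)}_k$. Since $\phi_i:\overline{D}\to\overline{\Omega_i}$ is continuous and $\overline{\Omega_i}$ is compact, and since uniform convergence on compact subsets of $\overline{\Omega}$ restricts to uniform convergence on the compact set $\phi_i(\mathbb{T}) \subset \partial\Omega_i \subset \overline{\Omega}$, it follows that $Re(f_m\circ\phi_i)\restriction_{\mathbb{T}} \to Re(f\circ\phi_i)\restriction_{\mathbb{T}}$ uniformly on $\mathbb{T}$, i.e. uniformly in $\theta\in\mathbb{R}$. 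So the real heart of the matter is purely a statement about $C_{\mathbb{R}}(\mathbb{T})$: the complement of $D_k$ is closed under uniform convergence.

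So I would first isolate and prove the claim: if $u_m \to u$ uniformly on $\mathbb{T}$ and $u_m \notin D_k$ for every $m$, then $u \notin D_k$. Unwinding the definition, $u_m \notin D_k$ means there exists $\theta_m \in \mathbb{R}$ such that for all $y \in (\theta_m, \theta_m + 1/k)$ we have $|u_m(y) - u_m(\theta_m)| \le k|y - \theta_m|$. By $2\pi$-periodicity we may take $\theta_m \in [0, 2\pi]$, so after passing to a subsequence $\theta_m \to \theta_\ast$ for some $\theta_\ast \in \mathbb{R}$. I claim this $\theta_\ast$ witnesses $u \notin D_k$. Fix any $y \in (\theta_\ast, \theta_\ast + 1/k)$. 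For $m$ large, $y$ lies in the open interval $(\theta_m, \theta_m + 1/k)$ as well (since the endpoints converge to $\theta_\ast$ and $\theta_\ast + 1/k$ and $y$ is strictly interior), so $|u_m(y) - u_m(\theta_m)| \le k|y-\theta_m|$. Letting $m\to\infty$ and using uniform convergence $u_m \to u$ together with continuity of $u$ (so $u_m(\theta_m) \to u(\theta_\ast)$, which needs the standard $\varepsilon/2$ split between $|u_m(\theta_m)-u(\theta_m)|$ and $|u(\theta_m)-u(\theta_\ast)|$), we get $|u(y) - u(\theta_\ast)| \le k|y-\theta_\ast|$ for every such $y$. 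Hence $u \notin D_k$.

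The main obstacle, and the only point requiring care, is the handling of the endpoint: the condition defining $D_k$ quantifies over the open interval $(\theta, \theta+1/k)$, and in the limit one must be sure that a given $y \in (\theta_\ast, \theta_\ast+1/k)$ eventually lies inside $(\theta_m, \theta_m+1/k)$. This is why it is essential that the interval is open — a point $y$ strictly between $\theta_\ast$ and $\theta_\ast + 1/k$ stays strictly between $\theta_m$ and $\theta_m + 1/k$ once $|\theta_m - \theta_\ast|$ is small enough. One should also note that working on $[0,2\pi]$ (a compact interval) to extract the convergent subsequence $\theta_m \to \theta_\ast$ is legitimate precisely because everything in sight is $2\pi$-periodic, so the choice of representative for $\theta_m$ is immaterial. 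Assembling these pieces: $C_{\mathbb{R}}(\mathbb{T}) \setminus D_k$ is sequentially closed, hence closed (the space is metric), so $D_k$ is open in $C_{\mathbb{R}}(\mathbb{T})$; and $E^{(i)}_k$ is the preimage of $D_k$ under the continuous map $f \mapsto Re(f\circ\phi_i)\restriction_{\mathbb{T}}$ from $A(\Omega)$ to $C_{\mathbb{R}}(\mathbb{T})$, hence open, which is the assertion.
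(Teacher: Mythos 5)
Your proposal is correct and is essentially the paper's argument: the same sequential scheme of negating the defining condition of $D_k$, using $2\pi$-periodicity to place the witnesses $\theta_m$ in $[0,2\pi]$, extracting a convergent subsequence, exploiting the openness of the interval so that a fixed $y\in(\theta_\ast,\theta_\ast+\tfrac1k)$ eventually lies in $(\theta_m,\theta_m+\tfrac1k)$, and passing to the limit via uniform convergence. The only difference is cosmetic: you factor the argument through the continuous map $f\mapsto Re(f\circ\phi_i)\restriction_{\mathbb{T}}$ and the closedness of $C_{\mathbb{R}}(\mathbb{T})\setminus D_k$, while the paper runs the identical estimate directly on $A(\Omega)$.
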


\begin{proof}
Let $k\in\mathbb{N}$ and $i\in\{0,1,...,n-1\}$. We will prove equivalently that $A(\Omega)
\setminus E_k^{(i)}$ is a 
closed subset of $A(\Omega)$. Let ${f_m}$ be a sequence in $A(\Omega)\setminus 
E^{(i)}_k$ converging uniformly on $\overline{\Omega}$ to a function $f\in A(\Omega)$.
For every $m\in\mathbb{N}$ there exist $\theta_m\in\mathbb{R}$ such that 
\begin{equation} 
\left|\dfrac{u_m(\phi_i(y))-u_m(\phi_i(\theta_m))}{y-\theta_m}\right|\leq k,
\end{equation}
where $u_m=Ref_m$.
Since every $u_m$ is 2$\pi$- periodic we may assume that every $\theta_m \in[0,2\pi]$ and thus 
the sequence $\{\theta_m\}$ has a convergent subsequence. Without loss 
of generality we assume that $\{\theta_m\}$ converges to a $\theta\in[0,2\pi]$. 

Let $y\in\left(\theta,\theta+\dfrac{1}{k}\right)$. We can easily deduce that there exists 
$m_0\in\mathbb{N}$, such that for every $m\geq m_0$ it is true that 
$y\in\left(\theta_m,\theta_m+\dfrac{1}{k}\right)$. Thus, applying $(4.3)$ and using the 
convergence of $\{\theta_m\}$ to $\theta$ and the uniform convergence of $\{f_m\}$ to $f$ we 
have that
\begin{equation} 
\left|\dfrac{u(\phi_i(y))-u(\phi_i(\theta))}{y-\theta}\right|\leq k,
\end{equation}
where $u=Ref$.
Hence $(4.4)$ holds for every $y\in\left(\theta,\theta+\dfrac{1}{k}\right)$ and thus 
$f\in A(\Omega)\setminus E_k^{(i)}$. Therefore, $A(\Omega)\setminus E^{(i)}_k$ is a closed subset 
of $A(\Omega)$.
\end{proof}

\begin{theorem}
The class
\begin{equation}
S=\bigcap_{i=0}^{n-1}\bigcap_{k=1}^\infty E^{(i)}_k
\end{equation}
is a dense and $G_\delta$ subset of $A(\Omega)$.
\end{theorem}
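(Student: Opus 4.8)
The plan is to apply Baire's category theorem in the Banach space $A(\Omega)$. By Lemma \ref{open}, each $E_k^{(i)}$ is open, and the analogous set defined with $\mathrm{Im}(f\circ\phi_i)$ in place of $\mathrm{Re}(f\circ\phi_i)$ is open by the same argument; moreover one checks that $\bigcap_{k}E_k^{(i)}$ (intersected with its imaginary-part counterpart) is exactly the set of $f\in A(\Omega)$ for which $(f\circ\phi_i)\restriction_{\mathbb{T}}\in Z$. Hence $S$ is a countable intersection of open sets, i.e. $G_\delta$, and it remains only to prove density; then Baire gives that $S$ is dense $G_\delta$.

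For density, the main tool is the Laurent-type decomposition of functions in $A(\Omega)$. Given $f\in A(\Omega)$ and $\varepsilon>0$, I would first use Mergelyan's theorem to approximate $f$ uniformly on $\overline{\Omega}$ by a rational function with poles off $\overline{\Omega}$, and then split this rational function (or more conceptually, split $f$ itself via the decomposition $f=\sum_{i=0}^{n-1}f_i$) into a sum of a function $f_0\in A(\Omega_0\cup\{\infty\})$ holomorphic on the unbounded component together with functions $f_i\in A_0(\hat{\mathbb{C}}\setminus\overline{\Omega_i})$ for $i=1,\dots,n-1$, each holomorphic off the $i$-th ``hole''. This is the standard partial-fractions / Cauchy-integral splitting: each $f_i$ extends holomorphically across every $\partial\Omega_j$ with $j\neq i$, so $(f_i\circ\phi_j)\restriction_{\mathbb{T}}$ is real-analytic, hence in particular differentiable, for $j\neq i$.

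Now I would perturb only the ``diagonal'' pieces. By Theorem \ref{complement} (and Proposition \ref{exterior} / Theorem \ref{Jordan}) applied to each individual domain, the functions $g$ in the relevant algebra with $(g\circ\phi_i)\restriction_{\mathbb{T}}\in Z$ are dense; so I can replace each $f_i$ by a nearby $\tilde f_i$ (within $\varepsilon/n$ in sup norm) such that $(\tilde f_i\circ\phi_i)\restriction_{\mathbb{T}}\in Z$. Set $g=\sum_i \tilde f_i$. For a fixed index $j$, the sum $g\circ\phi_j$ equals $(\tilde f_j\circ\phi_j)$ plus $\sum_{i\neq j}(\tilde f_i\circ\phi_j)$; the latter is a finite sum of functions differentiable at every point (real-analytic on $\mathbb{T}$), and adding a function that is differentiable at $\theta$ to one whose difference quotients at $\theta$ have $\limsup=+\infty$ leaves the $\limsup=+\infty$ property intact. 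Hence $(g\circ\phi_j)\restriction_{\mathbb{T}}\in Z$ for every $j$, i.e. $g\in S$, and $\norm{f-g}_\infty<2\varepsilon$ (say), proving density.

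The main obstacle is making the Laurent/partial-fractions decomposition rigorous and controlling it: one must verify that an arbitrary $f\in A(\Omega)$ — not merely a rational function — can be written (after an initial Mergelyan approximation) as $\sum_{i=0}^{n-1}f_i$ with $f_i$ in the correct one-hole algebra, and crucially that each cross-term $f_i\circ\phi_j$ ($i\neq j$) is genuinely real-analytic on $\mathbb{T}$ because $f_i$ is holomorphic in a neighbourhood of $\partial\Omega_j=\phi_j(\mathbb{T})$ while $\phi_j$ extends analytically across $\mathbb{T}$ (by reflection, since $\partial\Omega_j$ is a Jordan curve — here one may need analytic Jordan curves, or else argue that $\phi_j$ is at least $C^1$ up to the boundary, which suffices to preserve differentiability of the composition). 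Once the decomposition and the regularity of the off-diagonal terms are in hand, the perturbation argument and the Baire-category conclusion are routine, using only the single-domain results already established.
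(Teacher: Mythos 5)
There is a genuine gap exactly at the point you flag but do not resolve: the claim that the cross terms $(\tilde f_i\circ\phi_j)\restriction_{\mathbb{T}}$, $i\neq j$, are real-analytic, or even differentiable, in $\theta$. Under the paper's hypotheses $\partial\Omega_j$ is only a Jordan curve; Carath\'eodory's theorem gives that $\phi_j$ extends to a homeomorphism of $\overline{D}$ onto $\overline{\Omega_j}$, and nothing more. For a non-rectifiable (e.g.\ fractal) boundary the parametrization $\theta\mapsto\phi_j(e^{i\theta})$ can itself be nowhere differentiable, so holomorphy of $\tilde f_i$ in a neighbourhood of $\partial\Omega_j$ does not make $\tilde f_i\circ\phi_j$ differentiable in $\theta$; your suggested remedies (analytic Jordan curves, or $C^1$ extension of $\phi_j$ up to $\mathbb{T}$) are additional hypotheses not available here. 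Without differentiability, or at least bounded difference quotients, of these cross terms, the step ``adding a differentiable function preserves $\limsup=+\infty$'' is unavailable, and in general adding two functions with unbounded difference quotients can cancel the blow-up, so your density argument does not go through in the stated generality.

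The paper's proof is designed precisely to dodge this boundary-regularity issue: the approximation of the off-diagonal part is done in the disk coordinate, not in the $z$-plane. Fixing one index, say $i=0$, it writes $f=f_0+g$ with $g=f_1+\dots+f_{n-1}$ and applies Mergelyan's theorem to $g\circ\phi_0$ on the compact set $\phi_0^{-1}(\overline{\Omega})\subset\overline{D}$, obtaining rational functions $r_m$ with poles off this set; since $\mathbb{T}\subset\phi_0^{-1}(\overline{\Omega})$, each $r_m$ is holomorphic in a neighbourhood of $\mathbb{T}$, so $Re\,r_m(e^{iy})$ is differentiable in $y$ with no regularity of $\phi_0$ needed. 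The approximant of $f$ is then $g_m+r_m\circ\phi_0^{-1}$, where $g_m\in A(\Omega_0)$ with $g_m\circ\phi_0\in Z(D)$ comes from the single-domain density result of Section 3; composed with $\phi_0$ on $\mathbb{T}$ this is $g_m\circ\phi_0+r_m$, whose real part keeps unbounded one-sided difference quotients at every $\theta$. This establishes density of each $S_i=\bigcap_k E^{(i)}_k$ separately, and Baire's theorem (rather than a single simultaneous perturbation) yields density of $S=\bigcap_i S_i$. Note also that the theorem at hand concerns only the real parts, so no imaginary-part counterpart of $E^{(i)}_k$ is needed at this stage; that enters only afterwards through $S\cap iS$.
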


\begin{proof}
Lemma \ref{open} indicates that the sets
\begin{equation}
S_i=\bigcap_{k=1}^\infty E^{(i)}_k
\end{equation}
are $G_\delta$. We will prove that $S_0$
is dense in $A(\Omega)$. 

Let $f\in A(\Omega)$ and $f_i\in A(\Omega_i)$ be such that 
$f=f_0+f_1+...+f_{n-1}$. Let also $g=f_1+...+f_{n-1}$.
Since $\overline{\Omega}\subset\overline{\Omega_0}$ is compact, 
$\hat{\mathbb{C}}\setminus\Omega$ has $n$ connected components and $\phi_0^{-1}:
\overline{\Omega_0}\rightarrow\mathbb{C}$ 
is a conformal map, the set $\phi_0^{-1}(\overline{\Omega})\subset \overline{D}\subset\mathbb{C}$ is compact and 
$\hat{\mathbb{C}}\setminus\phi_0^{-1}(\overline{\Omega})$ has $n$ connected components.
The function $g\circ\phi_0: \phi_0^{-1}(\overline{\Omega})\rightarrow\mathbb{C}$ belongs to 
$A(\phi_0^{-1}(\Omega))$ and by Mergelyan's theorem there exists a sequence $\{r_m\}$ of 
rationals functions with poles off $\phi_0^{-1}(\overline{\Omega})$ which converges to 
$g\circ\phi_0$ uniformly on 
$\phi_0^{-1}(\overline{\Omega})$. Thus, the sequence $\{r_m\circ\phi_0^{-1}\}$ converges to 
$g$ uniformly on $\overline{\Omega}$. 

Since $f_0\in A(\Omega_0)$, there exists a sequence $\{g_m\}$, such that $g_m\in A(\Omega_0)$ and $g_m\circ\phi_0\in Z(D)$, which converges to $f_0$. From the above we have that the 
sequence $\{g_m+r_m\circ\phi^{-1}_0\}$ converges uniformly to $f$ in $A(\Omega)$ and also that:
\begin{gather}
\left|\dfrac{Re(g_m+r_m\circ\phi_0^{-1})(\phi_0(y))-Re(g_m+r_m\circ\phi_0^{-1})
(\phi_0(\theta))}{y-\theta}\right|\geq \\
\nonumber \left|\dfrac{Reg_m(\phi_0(y))-Reg_m(\phi_0(\theta))}{y-\theta}\right|-
\left|\dfrac{Re(r_m\circ\phi_0^{-1})(\phi_0(y))-Re(r_m\circ\phi_0^{-1})
(\phi_0(\theta))}{y-\theta}\right|= \\
\nonumber \left|\dfrac{Reg_m(\phi_0(y))-Reg_m(\phi_0(\theta))}{y-\theta}\right|-
\left|\dfrac{Rer_m(y)-Rer_m(\theta)}{y-\theta}\right|
\end{gather}
for every $\theta,y\in\mathbb{R}$, $\theta\neq y$.
From $(4.7)$ and the fact that the functions $r_m$ are differentiable at the unit circle, 
every $g_m+r_m\circ\phi_0^{-1}$ belongs to $S_0$. Therefore, the set $S_0$ is dense.
 
Similarly, the sets $S_1,S_2,...,S_{n-1}$ are dense and Baire's theorem completes the proof.
\end{proof}

\begin{theorem}\label{curves}
The class of functions $f\in A(\Omega)$ such that for every $i\in\{0,1,...,n-1\}$, the 
functions $(Ref\circ\phi_i){\restriction}_{\mathbb{T}}$ and 
$(Imf\circ\phi_i){\restriction}_{\mathbb{T}}$ are nowhere differentiable is residual in 
$A(\Omega)$.
\end{theorem}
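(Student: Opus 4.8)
The plan is to bootstrap the preceding theorem, which only constrains the real parts $Re(f\circ\phi_i)$, into a statement about both real and imaginary parts, and to observe that membership in the $G_\delta$ class $S$ already forces honest nowhere differentiability. Since $\Omega$ is bounded, $\overline{\Omega}$ is compact and the topology of $A(\Omega)$ is exactly that of uniform convergence on $\overline{\Omega}$; hence $A(\Omega)$ is a Banach space, in particular a complete metric space, so Baire's theorem applies and a finite intersection of dense $G_\delta$ subsets of $A(\Omega)$ is again dense $G_\delta$.

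First I would record that for $f\in S=\bigcap_{i=0}^{n-1}\bigcap_{k\geq 1}E_k^{(i)}$ and for each $i$, the function $u_i:=Re(f\circ\phi_i)\restriction_{\mathbb{T}}$ satisfies: for every $\theta\in\mathbb{R}$ and every $k\in\mathbb{N}$ there is $y\in(\theta,\theta+1/k)$ with $|u_i(y)-u_i(\theta)|>k|y-\theta|$. Letting $k\to\infty$ this gives $\limsup_{y\to\theta^+}|u_i(y)-u_i(\theta)|/|y-\theta|=+\infty$ for every $\theta$, which precludes differentiability of $u_i$ at any point; so every $f\in S$ has all the functions $Re(f\circ\phi_i)\restriction_{\mathbb{T}}$ nowhere differentiable. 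To handle the imaginary parts I would pass through multiplication by the imaginary unit: the map $M\colon A(\Omega)\to A(\Omega)$, $M(f)=\mathrm{i}\,f$, is a surjective linear isometry, hence a homeomorphism, so $M^{-1}(S)=\{f\in A(\Omega):\mathrm{i}\,f\in S\}$ is again a dense $G_\delta$ subset of $A(\Omega)$. For $f\in M^{-1}(S)$ and each $i$ one has $Re\big((\mathrm{i}f)\circ\phi_i\big)=Re\big(\mathrm{i}\,(f\circ\phi_i)\big)=-Im(f\circ\phi_i)$, so applying the previous observation to $\mathrm{i}f$ shows that every $Im(f\circ\phi_i)\restriction_{\mathbb{T}}$ is nowhere differentiable.

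Finally I would intersect: $T:=S\cap M^{-1}(S)$ is dense and $G_\delta$ in $A(\Omega)$ by Baire, and every $f\in T$ satisfies, for all $i\in\{0,\ldots,n-1\}$, that both $Re(f\circ\phi_i)\restriction_{\mathbb{T}}$ and $Im(f\circ\phi_i)\restriction_{\mathbb{T}}$ are nowhere differentiable. Hence the class in the statement contains the dense $G_\delta$ set $T$ and is therefore residual. There is no serious obstacle here; the only points needing a little care are the passage from the quantitative condition defining $D_k$ to genuine nowhere differentiability, and the remark that the imaginary parts require no new density argument, being handled by the trivial identity $Im=-Re\circ M$ together with the invariance of the dense $G_\delta$ property under the homeomorphism $M$.
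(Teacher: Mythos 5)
Your argument is correct and follows essentially the same route as the paper: take the dense $G_\delta$ class $S$ from the preceding theorem, use that multiplication by $\mathrm{i}$ is an isometric automorphism of $A(\Omega)$ so that the rotated copy of $S$ is again dense and $G_\delta$, intersect via Baire, and note that the quantitative condition defining the sets $E_k^{(i)}$ already yields an infinite $\limsup$ of difference quotients, hence nowhere differentiability of both real and imaginary parts. The only cosmetic difference is that you use $M^{-1}(S)=\{f:\mathrm{i}f\in S\}$ where the paper writes $S\cap \mathrm{i}S$, which is the same idea.
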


\begin{proof}
It is easy to see that $S\cap iS$ is a subset of all functions $f\in A(\Omega)$ such that for 
every $i\in\{0,1,...,n-1\}$ the functions $(Ref\circ\phi_i){\restriction}_{\mathbb{T}}$ and $
(Imf\circ\phi_i){\restriction}_{\mathbb{T}}$ have no derivatives at any point. Since $S$ is $G_{\delta}$ and 
dense in $A(\Omega)$ and multiplication by $i$ is an automorphism of $A(\Omega)$, we conclude 
that $iS$ is $G_{\delta}$ and dense in $A(\Omega)$. Applying Baire's category theorem we have 
that $S\cap iS$ is $G_{\delta}$ dense in $A(\Omega)$.
\end{proof}

\section{The case of several variables}

Let $I$ be a non empty finite or countably infinite set and $\{\Omega_i\}_{i\in I}$ be a family 
of domains in $\mathbb{C}$, all of them bounded by a finite number of disjoint Jordan curves as in section 4. 
We denote by $k_i$ the degree of connectivity of $\Omega_i$. In accordance 
to the notation of the previous sections, we denote by $V_{i,j}$, $i\in I$,
$j\in\{0,...,k_i-1\}$ the connected components of 
$\hat{\mathbb{C}}\setminus\overline{\Omega_i}$ 
and $\Omega_{i,j}=\hat{\mathbb{C}}\setminus \overline{V_{i,j}}$. Let also 
$\infty\in \Omega_{i,0}$ and $\phi_{i,j}:\overline{D}\rightarrow\overline{\Omega_{i,j}}$ be
some fixed Riemann maps. 

We consider the spaces $\Omega=\prod_{i\in I}\Omega_i$, 
$\overline{\Omega}=\prod_{i\in I}\overline{\Omega_i}$ and $b_0 \Omega=\prod_{i\in
 I}\partial\Omega_i$, the distinguished boundary of $\Omega$, all of them endowed with the 
product topology. According to \cite{[3]}, \cite{[4]} and \cite{[5]} we give the following definition.

\begin{definition} A function $f:\overline{\Omega}\rightarrow\mathbb{C}$ belongs to the space 
$A(\Omega)$ if it is continuous
on $\overline{\Omega}$, endowed with the product topology, and separately holomorphic on $\Omega$. By
separately holomorphic we mean that if all coordinates but one are fixed, say the
coordinate $i_0$, then the restriction of $f$ is a holomorphic function of the variable 
$z_{i_0}$ in $\Omega_{i_0}$. We endow the space $A(\Omega)$ with the supremum norm.
\end{definition}

We also consider the set
\begin{equation}
R=\prod_{i\in I}\{0,...,k_i-1\}
\end{equation} 
and the functions 

\begin{align}
\phi_r: \overline{D}^I &\longrightarrow \overline{\Omega_{1,r_1}} \times 
\overline{\Omega_{2,r_2}} \times ... \quad = \prod_{i\in I}\overline{\Omega}_{i,r_i} \\ \nonumber
(z_i)_{i\in I}&\longmapsto(\phi_{i,r_i}(z_i))_{i\in I}
\end{align}
where $r=(r_i)_{i\in I}\in R.$ 

We are going to prove that, generically, for every $f\in A(\Omega)$ the functions $Re(f\circ \phi_r)\restriction_{\mathbb{T}^I}$ and $Im(f\circ\phi_r)\restriction_{\mathbb{T}^I}$ have no directional 
derivatives for any $r\in R$ and any direction $v\in \l^\infty(I)\setminus\{0\}$. 
In order to do so, we will first treat the case where $I$ is a finite set and separately the case 
where $I$ is a countably infinite set.

\subsection{Finitely many variables}

Let $I$ be a non empty finite set.

We consider the sets
\begin{gather}
 D_{n,k,s}(I)=\Big\{u\in C_{\mathbb{R}}(\mathbb{T}^I):\text{for every } \theta\in\mathbb{R}^
I \text{ and every direction }\\ v\in \mathbb{R}^I 
\nonumber \text{ with } \frac1s\leq|v_k|\leq s \text{ and } |v_j|\leq s \text{ for every } j\in I  \text{ there 
exists } \\
y\in\left(\theta-\dfrac{1}{n}v,\theta+\dfrac{1}{n}v\right) \text{ such that } 
\nonumber |u(\theta)-u(y)|>n\Vert y-\theta \Vert_{\infty}\Big\}
\end{gather}
for some $n,s\in\mathbb{N}$ and $k\in I$. Also consider:
\begin{equation}
E_{n,k,s}^{(r)}(I)= \{f\in A(\Omega): Re(f\circ\phi_r)\restriction_{\mathbb{T}^I}\in D_{n,k,s}\}.
\end{equation}
for some $r\in R$.

\begin{lemma}\label{many}
For every $n\in I$ and $r\in R$ the set $E_{n,k,s}^{(r)}(I)$ is an open subset of $A(\Omega)$.
\end{lemma}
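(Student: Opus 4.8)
The plan is to mirror the argument of Lemma \ref{open}: rather than showing $E_{n,k,s}^{(r)}(I)$ is open directly, I will show that its complement $A(\Omega)\setminus E_{n,k,s}^{(r)}(I)$ is closed. So let $\{f_m\}$ be a sequence in $A(\Omega)\setminus E_{n,k,s}^{(r)}(I)$ converging uniformly on $\overline{\Omega}$ to some $f\in A(\Omega)$, and write $u_m=Re f_m$, $u=Re f$. Negating the defining property of $D_{n,k,s}(I)$, for each $m$ there exist $\theta_m\in\mathbb{R}^I$ and a direction $v_m\in\mathbb{R}^I$ with $\tfrac1s\leq|(v_m)_k|\leq s$ and $|(v_m)_j|\leq s$ for every $j\in I$, such that
\[
\bigl|u_m(\phi_r(\theta_m))-u_m(\phi_r(y))\bigr|\leq n\,\Vert y-\theta_m\Vert_\infty
\qquad\text{for every } y\in\Bigl(\theta_m-\tfrac1n v_m,\ \theta_m+\tfrac1n v_m\Bigr),
\]
where the segment is $\{\theta_m+tv_m:\ |t|<\tfrac1n\}$.

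Next I would extract convergent subsequences. Since each $u_m$ is $2\pi$-periodic in every coordinate and $I$ is finite, I may assume $\theta_m\in[0,2\pi]^I$, which is compact; likewise the set of admissible directions $\{v\in\mathbb{R}^I:\tfrac1s\leq|v_k|\leq s,\ |v_j|\leq s\ \text{for all } j\in I\}$ is closed and bounded in the finite-dimensional space $\mathbb{R}^I$, hence compact. Passing to a subsequence (not relabelled), I obtain $\theta_m\to\theta\in[0,2\pi]^I$ and $v_m\to v$, where $v$ still satisfies the same closed constraints on its coordinates; in particular $v\neq 0$.

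Now fix $y\in(\theta-\tfrac1n v,\theta+\tfrac1n v)$, say $y=\theta+tv$ with $|t|<\tfrac1n$, and set $y_m=\theta_m+t v_m$. Then $y_m\in(\theta_m-\tfrac1n v_m,\theta_m+\tfrac1n v_m)$ for every $m$, so the displayed inequality applies to $y_m$ and gives $\bigl|u_m(\phi_r(\theta_m))-u_m(\phi_r(y_m))\bigr|\leq n\,\Vert y_m-\theta_m\Vert_\infty=n|t|\,\Vert v_m\Vert_\infty$. Letting $m\to\infty$ and using that $u_m\to u$ uniformly on $\overline{\Omega}$ with $u$ continuous, that $\phi_r$ is continuous so $\phi_r(\theta_m)\to\phi_r(\theta)$ and $\phi_r(y_m)\to\phi_r(y)$ (because $\theta_m\to\theta$ and $y_m\to y$), and that $\Vert v_m\Vert_\infty\to\Vert v\Vert_\infty$, I conclude $\bigl|u(\phi_r(\theta))-u(\phi_r(y))\bigr|\leq n\,\Vert y-\theta\Vert_\infty$. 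Since $y$ was an arbitrary point of $(\theta-\tfrac1n v,\theta+\tfrac1n v)$, the function $Re(f\circ\phi_r)\restriction_{\mathbb{T}^I}$ fails the defining condition of $D_{n,k,s}(I)$ at this $\theta$ and this direction $v$, i.e. $f\notin E_{n,k,s}^{(r)}(I)$. Hence the complement is closed and $E_{n,k,s}^{(r)}(I)$ is open.

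The routine points — the two-$\varepsilon$ estimate $|u_m(\phi_r(\theta_m))-u(\phi_r(\theta))|\to 0$ coming from uniform convergence plus continuity of $u$, and the periodicity reduction putting $\theta_m$ in a compact cube — I would not spell out. The only genuine difference from Lemma \ref{open} is that now \emph{both} the base point $\theta_m$ and the direction $v_m$ vary with $m$; the key point making the limiting argument work is that the set of admissible directions is compact precisely because $I$ is finite (this is exactly where finiteness of $I$ is used, and is why the countably infinite case must be treated separately), together with the device of comparing the fixed $y=\theta+tv$ with the moving points $y_m=\theta_m+tv_m$ instead of keeping a point of the segment fixed.
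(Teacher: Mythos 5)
Your proposal is correct and follows essentially the same route as the paper: pass to the closed complement, extract convergent subsequences of the base points $\theta_m$ and of the admissible directions $v_m$ (compactness of the direction set being exactly where finiteness of $I$ enters), compare the fixed point $y=\theta+tv$ with the moving points $\theta_m+tv_m$, and pass to the limit using uniform convergence of $u_m$ and continuity of $\phi_r$. The only cosmetic difference is that you invoke Heine--Borel in $\mathbb{R}^I$ where the paper cites Tychonoff; the argument is the same.
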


\begin{proof}
We will prove that the set $A(\Omega)\setminus E_{n,k,s}^{(r)}(I)$ is closed. We consider a sequence
$\{f_m\}$ such that every $f_m\in A(\Omega)\setminus E_{n,k,s}^{(r)}(I)$, which uniformly converges 
to a function $f\in A(\Omega)$. For each $m$ there exists  $\theta^{(m)}\in \mathbb{T}^I$ and a 
direction $v^{(m)}\in\mathbb{R}^I$ with $|v^{(m)}_j|\leq s$ for all $j\in I$ and $\frac1s\leq |v^{(m)}_k|\leq s$, such that:
\begin{equation}
|(u_m\circ\phi_{r})(y)-(u_m\circ\phi_{r})(\theta^{(m)})|\leq n
\Vert{ y-\theta^{(m)}}\Vert_\infty
\end{equation}
for every 
$y\in\left(\theta^{(m)}-\dfrac{1}{n}v^{(m)},\theta^{(m)}+\dfrac{1}{n}v^{(m)}\right)$, where
$u_m=Ref_m$.
But $\mathbb{T}^I$ and the set 
$\{v\in\mathbb{R}^I: |v_j|\leq s, \forall j\in I \text{ and } \frac1s\leq|v_k|\leq s\}$ are 
metrizable and compact by Tychonoff's theorem. Thus, the sequences $\{\theta^{(m)}\}_{m\in\mathbb{N}}$ and 
$\{v^{(m)}\}_{m\in\mathbb{N}}$ have converging subsequences. Without loss of generality we assume that 
$\theta^{(m)}\rightarrow\theta\in\mathbb{R}^I$ and 
$v^{(m)}\rightarrow v\in\mathbb{R}^I$ with $|v_j|\leq s$ for all $j\in I$ and $\frac1s\leq |v_k|\leq s$.

If $y\in\left(\theta-\dfrac{1}{n}v,\theta+\dfrac{1}{n}v\right)$, then there exists 
$-\dfrac{1}{n}<t<\dfrac{1}{n}$ such that 
\begin{equation}
y=\theta+tv=\lim_{m\rightarrow\infty}(\theta^{(m)}+tv^{(m)}).
\end{equation}

Since $f_m$ converges uniformly to $f$, it follows that $u_m$ converge uniformly to $u$, where 
$u=Ref$ and $\phi_r$ is continuous which implies that $\phi_r(\theta^{(m)} +tv^{(m)}) 
\rightarrow \phi_r(\theta +tv)$, as $ m\to\infty$. The above combined with $(5.5)$ imply that

\begin{equation}
|(u\circ\phi_{r})(y)-(u\circ\phi_{r})(\theta)|\leq n
\Vert{ y-\theta}\Vert_\infty
\end{equation}
and therefore $f\in A(\Omega)\setminus E_{n,k,s}^{(r)}(I)$ which proves the lemma.
\end{proof}
\vspace{0.5cm}
Now let 
\begin{equation}
S_{k,s}^{(r)}(I) = \bigcap\limits_{n=1}^{\infty}E_{n,k,s}^{(r)}(I) 
\end{equation}
for all $r\in R$, $k\in I$ and $s\in\mathbb{N}$. Obviously, $S_{k,s}^{(r)}(I)$ is $G_\delta$ in $A(\Omega)$.

\begin{lemma}\label{dense}
For all $r\in R$ and $k,s\in I$ the class $S_{k,s}^{(r)}(I)$ is dense in $A(\Omega)$.
\end{lemma}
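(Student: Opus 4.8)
The plan is to reduce the density of $S_{k,s}^{(r)}(I)$ in $A(\Omega)$ to the already-established one-variable situation, exactly along the lines of the proof for a single finitely connected domain (the theorem just before Section 5). The key structural tool is the multidimensional Laurent-type decomposition: every $f\in A(\Omega)$ can be written as a finite sum $f=\sum_{q\in R} f_q$, where each $f_q$ is (uniformly on $\overline{\Omega}$ approximable by) a function that, when pre-composed with $\phi_r$ in each coordinate, is "rational with poles off the disc" in those coordinates $i$ where $q_i\neq r_i$ and behaves like a disc-algebra function (after $\phi_{i,r_i}$) in the coordinates $i$ where $q_i=r_i$. Concretely, fix the index $r\in R$; I would split $f=f_{(r)}+g$, where $f_{(r)}$ is the component of the decomposition that is "analytic of type $\phi_r$ in all variables'' and $g$ collects the rest. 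The summand $g$ has the feature that $g\circ\phi_r$ extends holomorphically across $\mathbb{T}$ in at least one variable, hence $Re(g\circ\phi_r)\restriction_{\mathbb{T}^I}$ is a $C^1$ function, and in particular its directional derivative in \emph{every} direction $v$ with $\tfrac1s\le|v_k|\le s$ exists and is bounded on the compact set $\mathbb{T}^I$.

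Next I would handle the main summand $f_{(r)}$. Here I would use the one-variable result (Proposition for $A_0(D)$, or more precisely the fact that $Z_0(D)$ is dense and $G_\delta$) applied in the $k$-th coordinate: approximate $f_{(r)}\circ\phi_r$ uniformly on $\overline{D}^I$ by a function $h$ whose dependence on $z_k$ alone forces $Re(h\circ(\text{identity}))$ — i.e. the section in the $k$-th direction — to lie in $Z$, uniformly over the other variables ranging in a compact set. The point of the index $k$ in $D_{n,k,s}$ and the constraint $\tfrac1s\le|v_k|\le s$ is precisely that a nonzero $v_k$ lets one convert an arbitrary directional difference quotient in $\mathbb{R}^I$ into one controlled by the single-variable oscillation in the $z_k$ slot: writing $y=\theta+tv$, the difference $u(\phi_r(y))-u(\phi_r(\theta))$ can be estimated below by the $k$-th partial oscillation minus a Lipschitz error coming from the other coordinates, and the latter is absorbed by choosing the approximation fine enough and $n$ large. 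So the approximant $g_m$ of $f_{(r)}$ with $g_m\circ\phi_r$ having the right one-variable $Z$-behaviour in $z_k$ lands in $E_{n,k,s}^{(r)}(I)$ for all $n$, hence in $S_{k,s}^{(r)}(I)$.

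Putting these together: given $f\in A(\Omega)$ and $\varepsilon>0$, decompose $f=f_{(r)}+g$, approximate $f_{(r)}$ by $g_m\in$ (closure gives) $S_{k,s}^{(r)}(I)$ within $\varepsilon$, and keep $g$ fixed; then for $g_m+g$ one has, for every $\theta$ and admissible $v$, the reverse-triangle estimate
\begin{equation}
|Re((g_m+g)\circ\phi_r)(y)-Re((g_m+g)\circ\phi_r)(\theta)|\ge |Re(g_m\circ\phi_r)(y)-Re(g_m\circ\phi_r)(\theta)|-\|D(Re(g\circ\phi_r))\|_\infty\,\|y-\theta\|_\infty,
\end{equation}
so that $g_m+g\in E_{n,k,s}^{(r)}(I)$ once $n$ exceeds the uniform Lipschitz bound of $Re(g\circ\phi_r)$ and the $Z$-property of $g_m$ in the $k$-th variable is used for the remaining large $n$. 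Since $\|(g_m+g)-f\|_\infty=\|g_m-f_{(r)}\|_\infty<\varepsilon$, density follows.

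The main obstacle I anticipate is making the "$Z$ in the $k$-th variable, uniformly in the others'' step precise: one must approximate $f_{(r)}\circ\phi_r$ on $\overline{D}^I$ by functions of the form $(z_i)\mapsto h(z_k)\cdot p((z_i)_{i\neq k})$ or finite sums thereof, with $h\in Z_0(D)$ and $p$ a polynomial, and then argue that the resulting $Re$ has, at every base point $\theta$ and in every admissible direction $v$, a difference quotient that blows up along a suitable sequence $y\to\theta$ — using that $|v_k|\ge 1/s$ keeps the motion in the $z_k$-slot genuinely present while $|v_j|\le s$ controls the error. Essentially this repeats the reverse-triangle-inequality bookkeeping of the single-domain theorem, but now with the extra bounded-multiplier $p((z_i)_{i\neq k})$ and the directional-derivative formulation; once the correct product-form approximation is set up (Mergelyan plus the one-variable $Z_0(D)$ density, coordinate by coordinate), the estimates are the routine ones already appearing in Lemma \ref{many} and in the Section 4 density proof.
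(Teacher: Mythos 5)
Your plan breaks at two points, and both are essential. First, the ``remainder is tame'' step is false as stated: if you split $f=f_{(r)}+g$ by a several-variable Laurent-type decomposition, a summand $f_q$ with $q\neq r$ extends holomorphically across $\mathbb{T}$ only in those coordinates $i$ where $q_i\neq r_i$; in every coordinate $j$ with $q_j=r_j$ (possibly including $j=k$) it is merely a boundary-continuous function of analytic type. Hence $Re(g\circ\phi_r)\restriction_{\mathbb{T}^I}$ is in general \emph{not} $C^1$, and its difference quotients $|Re(g\circ\phi_r)(\theta+tv)-Re(g\circ\phi_r)(\theta)|/\lVert tv\rVert_\infty$ need not stay bounded for the admissible directions (which may move all coordinates), so the reverse-triangle estimate with a ``uniform Lipschitz bound of $Re(g\circ\phi_r)$'' has no valid subtrahend. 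Second, the step you yourself flag as the main obstacle --- approximating $f_{(r)}\circ\phi_r$ by functions whose $k$-th variable sections lie in $Z$ \emph{uniformly over the other variables} --- is exactly the hard content of the lemma and is not carried out; the product-form approximants $h(z_k)\,p((z_i)_{i\neq k})$ you propose do not obviously work (at zeros of $p$, or where the phase of $p$ rotates, the $z_k$-oscillation of the real part can be killed), and the multidimensional decomposition $f=\sum_{q\in R}f_q$ in $A(\Omega)$ is itself a claim the paper never needs or proves. So the Section~4 template does not transfer in the way you describe.

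The paper's proof avoids both issues by a different and much lighter mechanism: no decomposition of the given function at all. It first exhibits a \emph{single fixed} element of $S_{k,s}^{(r)}(I)$, namely $f((z_i)_{i\in I})=g(z_k)$ where $g\in A(\Omega_k)$ comes from the one-variable result of Section~4, the constraints $\tfrac1s\le|v_k|\le s$, $|v_j|\le s$ guaranteeing $|v_k|/\lVert v\rVert_\infty\ge 1/s^2$ so the one-variable blow-up survives in every admissible direction. It then invokes the rational approximation theorem of Falc\'o--Nestoridis to get that $U\circ\widetilde{\phi_r^{-1}}$ is dense in $A(\Omega)$, where $U$ consists of finite sums of finite products of one-variable rational functions with poles off the relevant compacta; since each $q\in U$ is differentiable along $\mathbb{T}^I$ in the angular variables, the translate $f+U\circ\widetilde{\phi_r^{-1}}$ is dense in $A(\Omega)$ and contained in $S_{k,s}^{(r)}(I)$ by the reverse triangle inequality (unbounded plus genuinely bounded). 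In other words, the wild behaviour is carried entirely by one fixed summand and the approximation burden entirely by functions smooth on the distinguished boundary; your version instead asks the approximants of the ``main'' component to be wild uniformly in the other variables, which is the unproved (and hardest) part. To repair your argument you would essentially have to import the paper's trick: keep a fixed $z_k$-only wild function and approximate everything else by boundary-smooth functions.
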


\begin{proof}
Let us show that $S_{k,s}^{(r)}(I)$ is non empty. We have proved in a previous section that there is a 
function $g\in A(\Omega_k)$ such that

\begin{equation}
\limsup\limits_{y\rightarrow\theta^+} \left| \frac{Re(g\circ\phi_{k, r_k})(y) - Re(g\circ
\phi_{k,r_k})(\theta)}{y- \theta} \right| = +\infty
\end{equation}
for all $\theta\in\mathbb{R}$. The above implies that for all 
$\theta \in \mathbb{R}$:

\begin{equation}
\limsup\limits_{t\rightarrow 0^+} \left| \frac{Re(g\circ\phi_{k, r_k})(\theta + t\alpha) - Re(g\circ
\phi_{k,r_k})(\theta)}{t\alpha} \right| = +\infty
\end{equation}
for any $\alpha\in\mathbb{R}$ with $\dfrac1s\leq |\alpha|\leq s$.

Now consider the function $f: \overline{\Omega} \longrightarrow \mathbb{C}$ which maps 
$({z_i})_{i\in I}$ to $g(z_k)$. We will prove that $f\in S_{k,s}^{(r)}(I)$. Let $\theta\in\mathbb{R}^I$ 
and $v\in\mathbb{R}^I$ such that $|v_j|\leq s$ for all $j\in I$ and $\dfrac1s\leq |v_k|\leq s$. We observe 
that

\begin{equation}
\left| \frac{u_{r}(\theta + tv) - u_{r}(\theta)}{t\norm{v}_\infty} 
\right| = \dfrac{|v_k|}{\norm{v}_\infty}\left| 
\frac{\tilde{u}_{k, r_k}(\theta_k + tv_k) - \tilde{u}_{k,r_k}(\theta_k)}{tv_k} 
\right|,
\end{equation}
does not remain bounded as $t\to 0$, where $u_{r}=Ref\circ\phi_{r}$ and $\tilde{u}_{k,r_k}=Ref\circ\phi_{k,r_k}$. Thus $f\in S_{k,s}^{(r)}(I)$ and $S_{k,s}^{(r)}(I)\neq \emptyset$. 

Let us now show that $S_{k,s}^{(r)}(I)$ is dense in $A(\Omega)$. We remind the reader that

\begin{align}
\phi_r: \overline{D}^I &\longrightarrow \overline{\Omega_{1,r_1}} \times 
\overline{\Omega_{2,r_2}} \times ... = \prod_{i \in I}\overline{\Omega}_{i,r_i} \\
\nonumber (z_i)_{i\in I}& \longmapsto( \phi_{i,r_i}(z_i))_{i\in I}
\end{align}
and that the function

\begin{align}
\phi_r^{-1}: \overline{\Omega_{1,r_1}} \times \overline{\Omega_{2,r_2}} \times ...  &
\longrightarrow \overline{D}^I \\
\nonumber (z_i)_{i\in I}& \longmapsto( \phi_{i,r_i}^{-1}(z_i))_{i\in I}
\end{align}
is the inverse of $\phi_r$.
It easily follows that $\phi_r\in A(D^I)$ and $\phi_r^{-1}\in A(\Omega_{1,r_1}\times
\Omega_{2,r_2}\times...)$. Now, note that $\phi_r^{-1}(\overline{\Omega})=\prod\limits_{i\in I} 
\phi_{i,r_i}^{-1}(\overline{\Omega_i})$ where $\phi_{i,r_i}^{-1}(\overline{\Omega_i})$ is a 
compact set in $\mathbb{C}$ of finite connectivity and consider the functions

\begin{align}
\widetilde{\phi_r}: \phi_r^{-1}(\overline{\Omega}) &\longrightarrow \overline{\Omega} \\
\nonumber z & \longmapsto \phi_r(z)
\end{align}
and 

\begin{align}
\widetilde{\phi^{-1}_r}: \overline{\Omega} &\longrightarrow \phi_r^{-1}(\overline{\Omega}) \\
\nonumber z & \longmapsto \phi_r^{-1}(z)
\end{align}

We observe that $\widetilde{\phi_r^{-1}}$ is the inverse of $\widetilde{\phi_r}$. Also, note that if $h\in A(\Omega)$, then $h\circ\widetilde{\phi_r}\in A(\phi_r^{-1}
(\overline{\Omega}))$. Since $\phi_r^{-1}(\overline{\Omega})$ is a product of domains in 
$\mathbb{C}$ bounded by a finite number of disjoint Jordan curves, then according to the result of \cite{[3]} in the case of finitely many variables, the set 
$U$ of functions which are finite sums of finite products of rational functions of one complex 
variable $z_i$ with poles off $\Omega_i$ is dense in $A(\phi_r^{-1}
(\overline{\Omega}))$. Hence, the set $f+ U\circ \widetilde{\phi_r^{-1}}$ is 
dense in $A(\Omega)$. If $q\in U$, then for every  $t\in\mathbb{R}$, $\theta\in
\mathbb{R}^I$ and 
$v\in\mathbb{R}^I$ with $|v_j|\leq s$ for all $j\in I$ and $\dfrac1s\leq |v_k|\leq s$ we have that:

\begin{equation}
\left| \frac{(q\circ\widetilde{\phi_r^{-1}})(\phi_r(\theta+tv)) - (q\circ
\widetilde{\phi_r^{-1}})(\phi_r(\theta))}{t\norm{v}_\infty} \right| = \dfrac{1}{\norm{v}_
\infty}\left| \frac{q(\theta+tv) - q(\theta)}
{t} \right|.
\end{equation}
remains bounded. This follows from the fact that every rational function of one complex variable 
$z_i$ which appears in $q$ has poles off $\overline{\Omega_i}$. Thus, the function
$p(t)=q(\theta+tv)$ is differentiable at $0$. Therefore, $f+ U\circ 
\widetilde{\phi_r^{-1}}\subset S_{k,s}^{(r)}(I)$, which completes the proof.
\end{proof}

Baire's theorem combined with Lemma \ref{many} and Lemma \ref{dense} imply the following theorem.

\begin{theorem}\label{intersection}
The class
\begin{equation}
S(I) = \bigcap\limits_{r\in R}\bigcap\limits_{k\in I}\bigcap\limits_{s\in\mathbb{N}} S^{(r)}_{k,s}(I)
\end{equation}
is a dense and $G_\delta$ subset of $A(\Omega)$.
\end{theorem}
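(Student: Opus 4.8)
The plan is simply to package Lemmas \ref{many} and \ref{dense} through Baire's category theorem, so most of what remains is bookkeeping rather than new content.

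First I would record that the intersection defining $S(I)$ is \emph{countable}: since $I$ is finite and each connectivity $k_i$ is finite, the index set $R=\prod_{i\in I}\{0,\dots,k_i-1\}$ is finite, and $\mathbb{N}$ is countable, so $S(I)$ is a countable intersection of the sets $S^{(r)}_{k,s}(I)$. By Lemma \ref{many} every $E^{(r)}_{n,k,s}(I)$ is open in $A(\Omega)$, hence each $S^{(r)}_{k,s}(I)=\bigcap_{n\ge 1}E^{(r)}_{n,k,s}(I)$ is $G_\delta$; therefore $S(I)$, being a countable intersection of $G_\delta$ sets, is itself $G_\delta$.

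Next I would observe that $A(\Omega)$ endowed with the supremum norm is a Banach space, hence a Baire space: a uniform limit on $\overline{\Omega}$ of continuous functions is continuous, and a uniform limit of separately holomorphic functions is separately holomorphic (apply the one-variable case coordinate by coordinate, e.g.\ via Morera's theorem), so $A(\Omega)$ is complete. By Lemma \ref{dense} each $S^{(r)}_{k,s}(I)$ is dense in $A(\Omega)$. Thus $S(I)$ is a countable intersection of dense $G_\delta$ subsets of a Baire space, and Baire's category theorem yields that $S(I)$ is dense. Combining this with the previous paragraph gives that $S(I)$ is dense and $G_\delta$, as claimed.

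I do not expect a genuine obstacle here, since the analytic heart of the matter has already been isolated in Lemmas \ref{many} and \ref{dense}. The one point that deserves a word of care is the finiteness of $R$, which is precisely where the hypothesis that $I$ is finite (and each $\Omega_i$ is finitely connected) is used; for countably infinite $I$ this counting argument breaks down, which is why the introduction announces that the infinite case is handled separately by an approximation-by-finitely-many-variables argument.
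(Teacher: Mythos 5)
Your argument is correct and is exactly the paper's route: the paper proves this theorem in one line by combining Lemma \ref{many} and Lemma \ref{dense} with Baire's category theorem, and your write-up merely fills in the (sound) bookkeeping — finiteness of $R$, completeness of $A(\Omega)$, and countability of the intersection. No issues to flag.
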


Consequently, $S\cap iS$ is dense and $G_\delta$ in $A(\Omega)$ and it is a subset of the set of all functions $f\in A(\Omega)$ such that both $Re(f\circ\phi_r)\restriction_{\mathbb{T}^I}$ and $Im(f\circ\phi_r)\restriction_{\mathbb{T}^I}$ are nowhere differentiable for any $r\in R$ and any direction $v\in l^\infty(I)\setminus\{0\}$.
\subsection{Infinitely many variables}

Let $I$ be an infinitely countable set.

We consider again the sets
\begin{gather}
D_{n,k,s}(I)=\{u\in C_{\mathbb{R}}(\mathbb{T}^I):\text{for every } \theta\in\mathbb{R}^I
 \text{ and every direction } \\
v\in\mathbb{R}^I \nonumber \text{ with } |v_j|\leq s \text{ for any } j\in I \text{ and } \dfrac1s\leq |v_k|\leq s \text{ there exists } \\
 y\in\left(\theta-\dfrac{1}{n}v,\theta+\dfrac{1}{n}v\right) \text{ such that } 
\nonumber |u(\theta)-u(y)|>n\Vert y-\theta \Vert_{\infty}\}
\end{gather}
and
\begin{equation}
E_{n,k,s}(I)=\bigcap_{r\in R}\{f\in A(\Omega): Re(f\circ\phi_r)\restriction_{\mathbb{T}^I}\in D_{n,k,s}\}, 
\end{equation}
for $n,s \in\mathbb{N}$ and $k\in I$.

\begin{lemma}\label{infinite}
For every $n,s\in \mathbb{N}$ and $k\in I$ the set $E_{n,k,s}(I)$ is an open subset of $A(\Omega)$.
\end{lemma}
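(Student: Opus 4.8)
The plan is to follow the proof of Lemma~\ref{many}: I would show that
\[
A(\Omega)\setminus E_{n,k,s}(I)=\bigcup_{r\in R}\big\{f\in A(\Omega):\ Re(f\circ\phi_r)\restriction_{\mathbb{T}^I}\notin D_{n,k,s}(I)\big\}
\]
is closed, arguing with sequences since $A(\Omega)$ with the supremum norm is a metric space. The one new point compared with the finitely-many-variables case is that $R=\prod_{i\in I}\{0,\dots,k_i-1\}$ may be uncountable; but it is still compact and metrizable, being a countable product of finite discrete spaces, so when I pick a witnessing index $r^{(m)}\in R$ for each term of a convergent sequence in the complement I can extract a subsequence along which $r^{(m)}$ converges to some $r\in R$ --- which, each factor being finite and discrete, just means $r^{(m)}_i=r_i$ for all large $m$, for every fixed $i$.

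Concretely, let $f_m\to f$ in $A(\Omega)$ with each $f_m$ in the complement, and set $u_m=Re f_m$, $u=Re f$. For each $m$ there are $\theta^{(m)}\in[0,2\pi]^I$, an index $r^{(m)}\in R$, and a direction $v^{(m)}$ with $|v^{(m)}_j|\leq s$ for all $j$ and $\tfrac1s\leq|v^{(m)}_k|\leq s$, such that
\[
\big|(u_m\circ\phi_{r^{(m)}})(y)-(u_m\circ\phi_{r^{(m)}})(\theta^{(m)})\big|\leq n\,\|y-\theta^{(m)}\|_\infty
\]
for every $y$ in the segment $(\theta^{(m)}-\tfrac1n v^{(m)},\theta^{(m)}+\tfrac1n v^{(m)})$. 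By Tychonoff's theorem $[0,2\pi]^I$, the admissible-directions set $\{v\in\mathbb{R}^I:|v_j|\leq s\ \forall j,\ \tfrac1s\leq|v_k|\leq s\}$ (a closed subset of $\prod_j[-s,s]$), and $R$ are all compact and metrizable, so after passing to a subsequence I may assume $\theta^{(m)}\to\theta$, $v^{(m)}\to v$ in the product topology, and $r^{(m)}\to r$ in the sense above. Fixing $t$ with $|t|<\tfrac1n$ and writing $y^{(m)}=\theta^{(m)}+tv^{(m)}$, we have $y^{(m)}\to y:=\theta+tv$ coordinatewise; since $\phi_{r^{(m)}}=(\phi_{i,r^{(m)}_i})_{i\in I}$ with each $r^{(m)}_i$ eventually equal to $r_i$ and each $\phi_{i,r_i}$ continuous, it follows that $\phi_{r^{(m)}}(y^{(m)})\to\phi_r(y)$ and $\phi_{r^{(m)}}(\theta^{(m)})\to\phi_r(\theta)$ in $\overline{\Omega}$, and the uniform convergence $u_m\to u$ then gives $(u_m\circ\phi_{r^{(m)}})(y^{(m)})\to(u\circ\phi_r)(y)$, and likewise at $\theta^{(m)}$. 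Letting $m\to\infty$ in the displayed inequality produces
\[
\big|(u\circ\phi_r)(\theta+tv)-(u\circ\phi_r)(\theta)\big|\leq n\,|t|\,\limsup_{m}\|v^{(m)}\|_\infty\qquad\text{for all }|t|<\tfrac1n .
\]

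The step I expect to be the main obstacle is this final one. In the finite case $\|\cdot\|_\infty$ is continuous, so $\|v^{(m)}\|_\infty\to\|v\|_\infty$ and the limiting inequality reads exactly $\big|(u\circ\phi_r)(\theta+tv)-(u\circ\phi_r)(\theta)\big|\leq n\|tv\|_\infty=n\|y-\theta\|_\infty$, i.e. $(\theta,v)$ witnesses $Re(f\circ\phi_r)\restriction_{\mathbb{T}^I}\notin D_{n,k,s}(I)$, and the proof is complete. Here, however, $\|\cdot\|_\infty$ is only lower semicontinuous on $\mathbb{R}^I$ in the product topology, so a priori we get the inequality only with $L:=\lim_m\|v^{(m)}\|_\infty\geq\|v\|_\infty$ on the right-hand side, and that is too weak when $L>\|v\|_\infty$ --- the case where the supremum of the witnessing direction ``escapes to infinity'' along the subsequence. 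To handle it one must choose the witnesses more carefully: after a further extraction assume $\|v^{(m)}\|_\infty\to L$ and pick, for each $m$, a coordinate $j_m$ with $|v^{(m)}_{j_m}|\geq\|v^{(m)}\|_\infty-\tfrac1m$. If the $j_m$ remain in a finite set, a sub-subsequence has $j_m\equiv j_0$, so $|v_{j_0}|=L$, hence $\|v\|_\infty=L$ and we are done; the residual case $j_m\to\infty$ is the crux, and there one should first argue that a function whose restriction lies outside $D_{n,k,s}(I)$ must already fail the defining inequality for some direction whose sup-norm is attained within a fixed finite block of coordinates, and only then extract. It is this re-selection of witnesses --- not any single limiting computation --- that I expect to require the real work; granted it, $(r,\theta,v)$ witnesses $f\in A(\Omega)\setminus E_{n,k,s}(I)$, so the complement is closed and $E_{n,k,s}(I)$ is open.
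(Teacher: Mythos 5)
Up to its final step your argument is exactly the paper's proof of Lemma \ref{infinite}: pass to the complement, extract convergent subsequences of $\theta^{(m)}$, $v^{(m)}$ and $r^{(m)}$ using compactness and metrizability (in the product topology) of $\mathbb{T}^I$, of $\{v:|v_j|\le s\ \forall j,\ \tfrac1s\le|v_k|\le s\}$ and of $R$, use that $R$ is a product of finite discrete spaces so that $r^{(m)}_i=r_i$ for all large $m$ for each fixed $i$, deduce $\phi_{r^{(m)}}(\theta^{(m)}+tv^{(m)})\to\phi_r(\theta+tv)$, and let $m\to\infty$ in the witnessing inequality using the uniform convergence $f_m\to f$. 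The only place where you and the paper part ways is the last line: the paper concludes directly that $|(u\circ\phi_r)(y)-(u\circ\phi_r)(\theta)|\le n\Vert y-\theta\Vert_\infty$, i.e.\ it passes the right-hand side $n|t|\,\Vert v^{(m)}\Vert_\infty$ to the limit as though $\Vert v^{(m)}\Vert_\infty\to\Vert v\Vert_\infty$, whereas, as you observe, coordinatewise convergence only gives lower semicontinuity of the sup-norm (e.g.\ $v^{(m)}$ with $v^{(m)}_k=\tfrac1s$, $v^{(m)}_m=s$ and all other coordinates $0$ converges in the product topology to a $v$ with $\Vert v\Vert_\infty=\tfrac1s$ while $\Vert v^{(m)}\Vert_\infty=s$), so a priori the limiting inequality holds only with $L=\lim_m\Vert v^{(m)}\Vert_\infty\ge\Vert v\Vert_\infty$ on the right, which is weaker than what membership of $f$ in the complement demands. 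So the subtlety you flag is genuine, and it is precisely the step that the paper's proof performs without comment.

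As a proof, however, your submission is incomplete exactly there, by your own account: the case in which the near-maximal coordinates $j_m$ escape to infinity is left open, and the reduction you propose --- that any function outside $D_{n,k,s}(I)$ already fails the defining inequality for a direction whose sup-norm is attained in a fixed finite block --- is asserted, not proved, and is not obviously true. The natural attempts do not work: truncating the tail of $v$, or planting a coordinate of size $L$ at a large index $j_0$, changes the segment along which $u\circ\phi_r$ is evaluated, and the resulting perturbation is only additive (controlled by the modulus of continuity of $u\circ\phi_r$ in the far coordinates), while the inequality to be preserved has right-hand side $n|t|\Vert w\Vert_\infty$, which leaves no room for an additive error as $t\to0$; moreover the block would have to serve all the $f_m$ simultaneously for the subsequent extraction, and nothing in your sketch provides such uniformity. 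So to complete the proof you must either establish this re-selection of witnesses by a genuinely new argument or dispose of the case $L>\Vert v\Vert_\infty$ in some other way; you cannot borrow the missing step from the paper, since the paper's own proof does not address the point at all.
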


\begin{proof}
We will prove that the set $A(\Omega)\setminus E_{n,k,s}(I)$ is closed. We consider a sequence
$\{f_m\}$ such that every $f_m\in A(\Omega)\setminus E_{n,k,s}(I)$, which converges to a 
function $f\in A(\Omega)$. For each $m$ there exist $r^{(m)}\in R$,
$\theta^{(m)}\in \mathbb{T}^\mathbb{N}$ and a direction $v^{(m)}\in\mathbb{R}^\mathbb{N}$
with $|v^{(m)}_j|\leq s$ for all $j\in I$ and $\dfrac1s\leq |v^{(m)}_k|\leq s$, such that
\begin{equation}
|(u_m\circ\phi_{r^{(m)}})(y)-(u_m\circ\phi_{r^{(m)}})(\theta^{(m)})|\leq n
\Vert{ y-\theta^{(m)}}\Vert_\infty
\end{equation}
for every 
$y\in\left(\theta^{(m)}-\dfrac{1}{n}v^{(m)},\theta^{(m)}+\dfrac{1}{n}v^{(m)}\right)$, where
$u_m=Ref_m$.
But $R$, $\mathbb{T}^\mathbb{N}$ and the set 
$\{v\in\mathbb{R}^\mathbb{N}: |v_j|\leq s,\forall j\in I \text{ and } \dfrac1s \leq |v_k|\leq s\}$ are metrizable and compact by 
Tychonoff's theorem. Thus, they have converging subsequences. Without loss of generality we 
assume that $r^{(m)}\rightarrow r\in R$, 
$\theta^{(m)}\rightarrow\theta\in\mathbb{R}^\mathbb{N}$ and 
$v^{(m)}\rightarrow v$, where $v\in \mathbb{R}^\mathbb{N}$ and $|v_j|\leq s$ for all $j\in I$ and $\dfrac1s\leq |v_k|\leq s$.

If $y\in\left(\theta-\dfrac{1}{n}v,\theta+\dfrac{1}{n}v\right)$, then there exists a $t\in\mathbb{R}$ with 
$-\dfrac{1}{n}<t<\dfrac{1}{n}$ such that:
\begin{equation}
y=\theta+tv=\lim_{m\rightarrow\infty}(\theta^{(m)}+tv^{(m)}).
\end{equation}
Let $i\in I$. Since $R= \prod_{i\in I} \{ 0, ..., k_i- 1 \}$ is a product of finite sets and $r^{(m)} \to r\in\mathbb{R}$, there exists 
$m_0\in\mathbb{N}$, such that 
\begin{equation}
\phi_{i,{r_i}^{(m)}}=\phi_{i,r_i}, \forall m \geq m_0.
\end{equation}

Since $\phi_{i,r_i}$ is continuous, we have that
\begin{equation}
\phi_{i,{r_i}^{(m)}}({\theta_i}^{(m)}+t{v_i}^{(m)})\rightarrow 
\phi_{i,r_i}(y_i),
\quad \phi_{i,{r_i}^{(m)}}({\theta_i}^{(m)})\rightarrow \phi_{i,r_i}
(\theta_i)
\end{equation}
Hence,
\begin{equation}
\phi_{r_{(m)}}(y^{(m)})\rightarrow \phi_r(y), \quad \phi_{r_{(m)}}(\theta^{(m)})
\rightarrow \phi_r(\theta)
\end{equation}
in the product topology. Therefore, applying $(3.5)$, using $(3.9)$ and
letting $m\rightarrow\infty$ we conclude that
\begin{equation}
|(u\circ\phi_r)(y)-(u\circ\phi_r )(\theta)|\leq n\Vert y-\theta \Vert_\infty,
\end{equation}
where $u=Ref$,
since $f_m\rightarrow f$ uniformly. Thus, $f\in A(\Omega)\setminus E_{n,k,s}(I)$ and 
$A(\Omega)\setminus E_{n,k,s}(I)$ is closed.
\end{proof}

\begin{theorem}\label{final}
The class
\begin{equation}
S(I)=\bigcap_{k\in I}\bigcap_{s=1}^{\infty}\bigcap_{n=1}^\infty E_{n,k,s}(I)
\end{equation}
is a dense and $G_\delta$ subset of $A(\Omega)$.
\end{theorem}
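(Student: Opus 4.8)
The plan is to derive this from the finitely-many-variables Theorem~\ref{intersection} by means of Baire's category theorem, using crucially that functions depending on finitely many variables are dense in $A(\Omega)$. Since $I$ and $\mathbb{N}$ are countable, $S(I)$ is a countable intersection of the sets $E_{n,k,s}(I)$, each of which is open in $A(\Omega)$ by Lemma~\ref{infinite}; hence $S(I)$ is $G_\delta$. Moreover $\overline{\Omega}=\prod_{i\in I}\overline{\Omega_i}$ is compact by Tychonoff's theorem, so $C(\overline{\Omega})$ with the supremum norm is complete, and $A(\Omega)$ is a closed subspace of it since a uniform limit of separately holomorphic functions is separately holomorphic. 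Thus $A(\Omega)$ is a Baire space, and it suffices to prove that each $E_{n,k,s}(I)$ is dense in $A(\Omega)$.

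To this end fix $n,s\in\mathbb{N}$, $k\in I$, $f\in A(\Omega)$ and $\epsilon>0$. By the approximation results of \cite{[3]} and \cite{[5]} there is $g\in A(\Omega)$ depending only on the coordinates in some finite set $F\subseteq I$ with $\norm{f-g}_\infty<\epsilon/2$. Put $F'=F\cup\{k\}$, regard $g$ as an element of $A\big(\prod_{i\in F'}\Omega_i\big)$, and apply Theorem~\ref{intersection} with index set $F'$ to obtain $h$ in the dense set $S(F')$ with $\norm{g-h}_\infty<\epsilon/2$. Extending $h$ to $\overline{\Omega}$ by letting it ignore the coordinates outside $F'$, we obtain a function in $A(\Omega)$, still denoted $h$, with $\norm{f-h}_\infty<\epsilon$; it remains to show $h\in E_{n,k,s}(I)$.

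Fix $r\in R$. Since $h$ depends only on the coordinates in $F'$, the function $h\circ\phi_r$ depends on $r$ only through $r'=r\restriction_{F'}$, and $h\in E_{N,k,s}^{(r')}(F')$ for every $N\in\mathbb{N}$, in particular for $N=ns^2$. Given $\theta\in\mathbb{R}^I$ and a direction $v\in\mathbb{R}^I$ with $\frac1s\leq|v_k|\leq s$ and $|v_j|\leq s$ for all $j\in I$, the restriction $v\restriction_{F'}$ is admissible in the definition of $D_{N,k,s}(F')$, so there is $t$ with $|t|<1/N\leq 1/n$ for which $y=\theta+tv$ satisfies
\[
\left|Re(h\circ\phi_r)(\theta)-Re(h\circ\phi_r)(y)\right|>N\,\norm{(y-\theta)\restriction_{F'}}_\infty.
\]
Since $\norm{(y-\theta)\restriction_{F'}}_\infty\geq|t|\,|v_k|\geq|t|/s$ while $\norm{y-\theta}_\infty=|t|\,\norm{v}_\infty\leq s|t|$, the right-hand side is at least $N|t|/s=ns|t|\geq n\norm{y-\theta}_\infty$. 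Hence $Re(h\circ\phi_r)\restriction_{\mathbb{T}^I}\in D_{n,k,s}(I)$, and since $r\in R$ was arbitrary, $h\in E_{n,k,s}(I)$. Baire's theorem then shows $S(I)$ is dense, which completes the proof.

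The one genuinely delicate point is the norm comparison in the displayed inequality: $\norm{y-\theta}_\infty$ is a supremum over all of $I$ and may be attained at a coordinate outside $F'$, so the finitely-many-variables statement cannot be invoked with the parameter $n$ directly. Replacing $n$ by $N=ns^2$ absorbs the ratio $\norm{v}_\infty/\norm{v\restriction_{F'}}_\infty\leq s^2$, which is legitimate exactly because $h\in S(F')$ lies in $E_{N,k,s}^{(r')}(F')$ for every $N$. The remaining ingredients — openness from Lemma~\ref{infinite}, completeness of $A(\Omega)$, and the density of functions depending on finitely many variables — are routine given the earlier sections.
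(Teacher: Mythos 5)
Your proof is correct, but it follows a genuinely different (and in fact more careful) route than the paper. The paper does not run Baire's theorem over the sets $E_{n,k,s}(I)$ at this stage: it approximates $F$ by functions $F_n$ depending on finitely many variables $I_n$, replaces their restrictions by elements $g_n\in S(I_n)$ via Theorem \ref{intersection}, and then asserts that it ``is easy to see'' that the extensions $G_n$ belong to the full intersection $S(I)$, so that $S(I)$ itself is directly shown to be dense. That assertion is actually problematic as stated: for a coordinate $k\in I\setminus I_n$ and a direction $v$ supported only at $k$ (which satisfies $\frac1s\le|v_k|\le s$), the extended function $G_n\circ\phi_r$ is constant along the segment $\left(\theta-\frac1n v,\theta+\frac1n v\right)$, so the strict inequality $|u(\theta)-u(y)|>n\norm{y-\theta}_\infty$ fails and $G_n\notin E_{n,k,s}(I)$; moreover, even for $k\in I_n$ one must reconcile the sup norm over all of $I$ with the sup norm over $I_n$, which is exactly the point you isolate. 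Your argument repairs both issues: by fixing $(n,k,s)$, forcing $k$ into the finite index set ($F'=F\cup\{k\}$), and using that $h\in S(F')$ lies in $E_{N,k,s}^{(r')}(F')$ for the inflated parameter $N=ns^2$, you get the needed inequality $N\norm{(y-\theta)\restriction_{F'}}_\infty\ge n\norm{y-\theta}_\infty$ and hence density of each open set $E_{n,k,s}(I)$, after which Baire (legitimately available, since $A(\Omega)$ is complete in the sup norm) yields the theorem. So what your approach buys is a proof of the same statement that avoids the false intermediate claim $G_n\in S(I)$ and makes explicit the norm-comparison step the paper glosses over; the paper's approach is shorter but, as written, has a gap precisely at the ``easy to see'' step, which would have to be repaired essentially along the lines you follow.
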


\begin{proof}
Lemma \ref{infinite} implies that $S$ is a $G_\delta$. Thus, we only have to prove that $S$ is dense.
Let $F\in A(\Omega)$. From \cite{[3]} there exist a sequence of finite sets 
$\{I_n\}$, $I_n \subset I$, and a sequence 
$\{F_n\}$ in $A(\Omega)$, such that each
$F_n$ depends only on the variables $z_i$, $i\in I_n$ and $F_n\rightarrow F$. 
Let $f_n$ be the restriction of $F_n$ to $\prod_{i\in I_n} \Omega_i$.
From Theorem \ref{intersection} there exists a sequence $g_n\in S(I_n)=\bigcap_{r\in R(I_n)} \bigcap_{k\in I_n}\bigcap_{s=1}^{\infty}S_{k,s}^{(r)}(I_n)$, where $R(I_n)= \prod_{i\in I_n} \{ 0, ..., k_i -1\}$, such that
$\norm{f_n-g_n}_\infty\rightarrow 0$. It is easy to see that the functions 
$G_n:\overline{\Omega}\rightarrow\mathbb{C}$ which map $(z_i)_{i\in I}$ to 
$g_n((z_i)_{i\in I_n})$ respectively belong to $ S(I) = \bigcap_{k\in I}\bigcap_{s=1}^{\infty}\bigcap_{n=1}^{\infty}E_n(I)$ and $G_n\rightarrow F$. 
The proof is complete.
\end{proof}

The analogue of Theorem \ref{curves} follows immediately: if $I$ is a non empty finite set or an 
infinitely countable set, then the set 
\begin{equation}
T(I)=S(I)\cap iS(I)
\end{equation}
is a dense and $G_\delta$ subset of $A(\Omega)$. This gives the non directional differentiability at every boundary point of both real and imaginary part of the function in every 
direction in $l^\infty(I)\setminus\{0\}$.

\begin{remark}
In \cite{[2]} it is claimed that the set of functions $f\in A(D^\mathbb{N})$ for which for every 
$\theta\in\mathbb{R}^\mathbb{N}$ and every direction $v\in\mathbb{R}^\mathbb{N}$ with
$\norm{v}_{\infty}= 1$ and $\dfrac12\leq |v_k|\leq1$ for a fixed $k\in\mathbb{N}$ there 
exists 
$y\in\left(\theta-\dfrac{1}{n}v,\theta+\dfrac{1}{n}v\right)$ such that 
\begin{equation}
|u(\theta)-u(y)|>n\Vert y-\theta \Vert_{\infty}
\end{equation} 
is open in $A(D^\mathbb{N})$. The authors' proof uses the wrong statement that the set
$\{v\in\mathbb{R}^\mathbb{N} \text{ with }
\norm{v}_{\infty}= 1 \text{ and } \dfrac12\leq |v_k|\leq 1\}$ for a fixed $k\in\mathbb{N}$ 
is compact in $\mathbb{R}^\mathbb{N}$ endowed with the product topology. However, their proof 
works just fine for every compact set of directions with non zero $k$-coordinate. According to our Theorem \ref{final} their statement is correct.
\end{remark}

\noindent \textbf{Acknowledgement}: We would like to express our gratidute to
Professor Vassili Nestoridis for introducing us to the problem as well as for his guidance throughout the creation of this paper.

\end{document}